\def\BState{\State\hskip-\ALG@thistlm}
\def\downbar#1{
\setbox10=\hbox{$#1$}
            \dimen10=\ht10 \advance\dimen10 by 2.5pt
            \ifdim \dimen10<15pt 
               \advance\dimen10 by -0.5pt
               \dimen11=\dimen10
               \advance\dimen10 by 2.5pt
               \lower \dimen11
            \else \lower \ht10 \fi
            \hbox {\hskip 1.5pt \vrule height \dimen10 depth \dp10}}
\def\upbar#1{
\setbox10=\hbox{$#1$}
            \dimen10=\ht10 \advance\dimen10 by \dp10 \advance\dimen10 by 2.5pt
            \ifdim \dimen10<15pt 
                \advance\dimen10 by 2pt \fi
            \raise 2.5pt \hbox {\hskip -1.5pt \vrule height \dimen10}}
\newtheorem{theorem}{\bf Theorem}[section]
\newtheorem{remark}{\bf Remark}[section]
\numberwithin{equation}{section}
\journal{}
\begin{document}

\begin{frontmatter}

\title{An electrostatic interpretation of the zeros of sieved ultraspherical polynomials}

\author{K. Castillo}
\address{CMUC, Department of Mathematics, University of Coimbra,  3001-501 Coimbra, Portugal}
\ead{kenier@mat.uc.pt}

\author{M. N. de Jesus}
\address{ CI$\&$DETS/IPV, Polytechnic Institute of Viseu, ESTGV, Campus
Polit\'ecnico de Repeses, 3504-510 Viseu, Portugal}
\ead{mnasce@estv.ipv.pt}

\author{J. Petronilho}
\address{CMUC, Department of Mathematics, University of Coimbra,  3001-501 Coimbra, Portugal}
\ead{josep@mat.uc.pt}


\begin{abstract}
In a companion paper [On semiclassical orthogonal polynomials via polynomial mappings, J. Math. Anal. Appl. (2017)]
we proved that the semiclassical class of orthogonal polynomials is stable under polynomial transformations.
In this work we use this fact to derive in an unified way old and new properties concerning the sieved ultraspherical polynomials of the first and second kind. In particular we derive ordinary differential equations for these polynomials. As an application, we use the differential equation for sieved ultraspherical polynomials of the first kind to deduce that the zeros of these polynomials mark the locations of a set of particles that are in electrostatic equilibrium with respect to a particular external field.
\end{abstract}

\begin{keyword}
Orthogonal polynomials (OP)\sep semiclassical OP\sep polynomial mappings\sep
sieved OP\sep differential equations \sep electrostatics of OP
\MSC[2010] 34K99 \sep 42C05 \sep 33C45 \sep 30C15
\end{keyword}

\end{frontmatter}

\section{Introduction}

This is the second of two papers intended to develop the theory of polynomial mappings in the framework of the semiclassical orthogonal polynomial sequences. Throughout this paper we will use the abbreviations OP and OPS for orthogonal polynomial(s) and orthogonal polynomial(s) sequence(s), respectively.
In our first article \cite{KMZ1} we obtained basic properties fulfilled by monic OPS $\{p_n\}_{n\geq0}$ and $\{q_n\}_{n\geq0}$ linked by a polynomial mapping, in the sense that there exist two polynomials $\pi_k$ and $\theta_m$, of (fixed) degrees $k$ and $m$, respectively, where $0\leq m\leq k-1$, such that
$$
p_{nk+m}(x)=\theta_m(x)\,q_n(\pi_k(x))\,,\quad n=0,1,2,\ldots\;,
$$
under the assumption that one of the sequences $\{p_n\}_{n\geq0}$ or $\{q_n\}_{n\geq0}$ is a semiclassical OPS.
In particular, we proved that if at least one of the sequences $\{p_n\}_{n\geq0}$ or $\{q_n\}_{n\geq0}$ is semiclassical then so is the other one, and we gave relations between their classes \cite[Theorem 3.1]{KMZ1}.

Our present goal is to apply the results stated
in \cite{KMZ1} to the {\it sieved} OPS, introduced by Al-Salam, Allaway, and Askey \cite{AlSalamAllawayAskey}, and subsequently studied by several authors
(see e.g. \cite{Ismail-sieved1, CharrisIsmail-siev2,Ismail-sieved3,JerVan,VanAsscheMagnus,BustozIsmailWimp,CharrisIsmail, CharrisIsmailMonsalve,PacoZeCubic1,MarcioPetronilhoJAT}). The connection between sieved OPS and polynomials mappings has been observed by Charris and Ismail \cite{CharrisIsmail-siev2,CharrisIsmail}, Geronimo and Van Assche \cite{JerVan}, and Charris, Ismail, and Monsalve \cite{CharrisIsmailMonsalve}. These authors shown how the results involving sieved OPS follow by taking particular polynomial transformations.
For instance, take $\pi_k$ the monic Chebyshev polynomial of the first kind of degree $k$. Then (up to normalization) taking for $q_n$ the monic ultraspherical polynomial of degree $n$ of parameter $\lambda$ and choosing $m=0$, and so $\theta_m\equiv1$, $\{p_n\}_{n\geq0}$ becomes the monic sieved ultraspherical OPS of the first kind. Similarly, taking for $q_n$ the monic ultraspherical polynomial of degree $n$ of parameter $\lambda+1$ and choosing $m=k-1$ and $\theta_m$ the monic Chebyshev polynomial of the second kind of degree $k-1$, $\{p_n\}_{n\geq0}$ becomes the monic sieved ultraspherical OPS of the second kind.

The structure of the paper is as follows.
In Section 2 we introduce some background, including some known results on OPS
and polynomial mappings, and some basic facts on semiclassical OPS.
In Section 3 we review the definitions of the sieved ultraspherical polynomials of the first and of the second kind.
In Sections 4 and 5 we analyze separately each one of these families of OP.
For instance, we prove that both families are semiclassical of class $k-1$
except for one choice of the parameter $\lambda$ (being classical in such a case).
Using this fact and the theory of semiclassical OP presented by Maroni \cite{Maroni}, we give the structure relation that such sieved OPS satisfy, and then using these relations (together with known facts of the general theory of semiclassical OPS) we derive the linear homogeneous second order ordinary differential equation (ODE) that the sieved OP fulfill.
This ODE was obtained (by a different process) for the sieved OP of the second kind by Bustoz, Ismail, and Wimp \cite{BustozIsmailWimp}. As far as we know, the ODE for the sieved OPS of the first kind did not appeared before in the literature. The interest on such ODE comes at once from the original paper by Al-Salam, Allaway, and Askey, where in a final section devoted to some open problems concerning sieved OPS they wrote: ``A potentially very important result would be the second order differential equation these polynomials satisfy.'' In Section 6 we present an electrostatic model solved by the sieved OPS of the first kind using the second order ODE fulfilled by these polynomials.


\section{Background}

For reasons of economy of exposition, we assume familiarity with most of the results and notation appearing in
Sections 2 and 3 of our previous article \cite{KMZ1}. Let $\{p_n\}_{n\geq0}$ be a monic OPS, so that, according to Favard's theorem it is characterized by a three-term recurrence such as
\begin{equation}\label{TTRRint}
p_{n+1}(x)=(x-\beta_n)p_n(x)-\gamma_n p_{n-1}(x)\;,\quad n=0,1,2,\ldots\;,
\end{equation}
with $p_{-1}(x):=0$ and $p_0(x):=1$, where $\beta_n\in\mathbb{C}$ and
$\gamma_{n+1}\in\mathbb{C}\setminus\{0\}$ for each $n\in\mathbb{N}_0$.
In the framework of polynomial mappings, it is useful to write the recurrence relation
in terms of blocks of recurrence relations as
\begin{equation}\label{pnblock1}
\begin{array}r
(x-b_n^{(j)})p_{nk+j}(x)=p_{nk+j+1}(x)+a_n^{(j)}p_{nk+j-1}(x)\;,
\qquad \qquad \qquad \\
\rule{0pt}{1.2em} j=0,1,\dots, k-1 \, ; \quad n=0,1,2,\ldots\,.
\end{array}
\end{equation}
Without loss of generality, we assume $a_0^{(0)}:=1$.
In general, the $a_n^{(j)}$'s and $b_n^{(j)}$'s are complex numbers
with $a_n^{(j)}\neq0$ for all $n$ and $j$.
With these numbers we may construct the determinants $\Delta_n(i,j;x)$
introduced by Charris, Ismail, and Monsalve \cite{CharrisIsmail,CharrisIsmailMonsalve}, so that
\begin{equation}\label{Delt0}
\Delta_n(i,j;x):=\left\{
\begin{array}{cl}
0 & \mbox{if $j<i-2$} \\
\rule{0pt}{1.2em}
1  & \mbox{if $j=i-2$} \\
\rule{0pt}{1.5em} x-b_n^{(i-1)}  & \mbox{if $j=i-1$}
\end{array}
\right.
\end{equation}
and, if $j\geq i\geq 1$,
\begin{equation}\label{Delt1}
\Delta_n(i,j;x):=\left|
\begin{array}{cccccc}
x-b_n^{(i-1)} & 1 & 0 &  \dots & 0 & 0  \\
a_n^{(i)} & x-b_n^{(i)} & 1 &  \dots & 0 & 0 \\
0 & a_n^{(i+1)} & x-b_n^{(i+1)} &   \dots & 0 & 0 \\
\vdots & \vdots & \vdots  & \ddots & \vdots & \vdots \\
0 & 0 & 0 &  \ldots & x-b_n^{(j-1)} & 1 \\
0 & 0 & 0 &  \ldots & a_n^{(j)} & x-b_n^{(j)}
\end{array}
\right| \, ,
\end{equation}
for every $n\in\mathbb{N}_0$.
Taking into account that $\Delta_n(i,j;\cdot)$ is a polynomial
whose degree may exceed $k$, and since in (\ref{pnblock1})
the $a_n^{(j)}$'s and $b_n^{(j)}$'s were
defined only for $0\leq j\leq k-1$, we adopt the convention
\begin{equation}
b_n^{(k+j)}:=b_{n+1}^{(j)}\; ,\quad a_n^{(k+j)}:=a_{n+1}^{(j)}
\quad  i,j,n\in\mathbb{N}_0\;,
\label{convention1}
\end{equation}
and so the following useful equality holds:
\begin{equation}
\Delta_n(k+i,k+j;x)=\Delta_{n+1}(i,j;x)\;.
\label{convention2}
\end{equation}

\begin{theorem}\label{teobk1p2}\cite[Theorem 2.1]{MarcioPetronilhoJAT}
Let $\{p_n\}_{n\geq0}$ be a monic OPS characterized by the general
blocks of recurrence relations $(\ref{pnblock1})$.
Fix $r_0\in\mathbb{C}$, $k\in\mathbb{N}$, and $m\in\mathbb{N}_0$, with $0\leq m\leq k-1$ and $k\geq3$.
Then, there exist polynomials $\pi_k$ and $\theta_m$ of degrees k and m (respectively)
and a monic OPS $\{q_n\}_{n\geq0}$ such that $q_1(0)=-r_0$ and
\begin{equation}
p_{kn+m}(x)=\theta_m(x)\, q_n(\pi_k(x)) \;,\quad n=0,1,2,\ldots
\label{pnblock5p2}
\end{equation}
if and only if the following four conditions hold:
\begin{enumerate}
\item[{\rm (i)}]
$b_n^{(m)}$ is independent of $n$ for $n\geq0$;
\item[{\rm (ii)}]
$\Delta_n(m+2,m+k-1;x)$ is independent of $n$ for $n\geq0$ and for every $x$;
\item[{\rm (iii)}]
$\Delta_0(m+2,m+k-1;\cdot)$ is divisible by $\theta_m$, i.e., there
exists a polynomial $\eta_{k-1-m}$ with degree $k-1-m$ such that
$$
\Delta_0(m+2,m+k-1;x)=\theta_m(x)\,\eta_{k-1-m}(x)\, ;
$$
\item[{\rm (iv)}]
$r_n(x)$ is independent of $x$ for every $n\geq1$, where
$$
\begin{array}l
r_n(x):=
a_{n}^{(m+1)}\Delta_{n}(m+3,m+k-1;x)-a_0^{(m+1)}\Delta_{0}(m+3,m+k-1;x) \\
\rule{0pt}{1.2em} \qquad\qquad\qquad
+a_{n}^{(m)}\Delta_{n-1}(m+2,m+k-2;x)-a_0^{(m)}\Delta_{0}(1,m-2;x)\,\eta_{k-1-m}(x)\;.
\end{array}
$$
\end{enumerate}
Under such conditions, the polynomials $\theta_m$ and $\pi_k$ are explicitly given by
\begin{equation}\label{Pimka}
\begin{array}{l}
\pi_k(x)=\Delta_0(1,m;x)\,\eta_{k-1-m}(x)-a_0^{(m+1)}\,\Delta_0(m+3,m+k-1;x)+r_0
\; , \\ [0.5em]
\theta_m(x):=\Delta_0(1,m-1;x)\equiv p_m(x)\;,
\end{array}
\end{equation}
and the monic OPS $\{q_n\}_{n\geq0}$ is generated by the three-recurrence relation
\begin{equation}
q_{n+1}(x)=\left(x-r_n\right)q_{n}(x)-s_n q_{n-1}(x) \, , \quad
n=0,1,2,\ldots  \label{pnblock4p2}
\end{equation}
with initial conditions $\, q_{-1}(x)=0\,$ and $\, q_0(x)=1 \,$,
where
\begin{equation}\label{rnsn1}
r_n:=r_0+r_n(0)\; ,\quad s_n:=a_n^{(m)}a_{n-1}^{(m+1)}\cdots a_{n-1}^{(m+k-1)}\; ,
\quad n=1,2,\ldots \, .
\end{equation}
Moreover, for each $j=0,1,2,\ldots,k-1$ and all $n=0,1,2,\ldots$,
\begin{equation}
\begin{array}l
\displaystyle p_{kn+m+j+1}(x)=\frac{1}{\eta_{k-1-m}(x)}\,\left\{ \rule{0pt}{1.2em}
 \Delta_n(m+2,m+j;x)\, q_{n+1}(\pi_k(x)) \right. \qquad \\
\rule{0pt}{1.5em} \qquad\hfill \left. + \left(\prod_{i=1}^{j+1} a_n^{(m+i)}\right)
 \Delta_n(m+j+3,m+k-1;x)\, q_{n}(\pi_k(x))\,\right\} \, .
\end{array}
\label{pnblockmp2}
\end{equation}
\end{theorem}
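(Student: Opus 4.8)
The backbone of the argument is the fact that the tridiagonal determinants $\Delta_n(i,j;x)$ satisfy, for fixed $n$, a three-term recurrence in each of their indices: expanding (\ref{Delt1}) along its last row gives $\Delta_n(i,j;x)=(x-b_n^{(j)})\Delta_n(i,j-1;x)-a_n^{(j)}\Delta_n(i,j-2;x)$, and expanding along its first column gives $\Delta_n(i,j;x)=(x-b_n^{(i-1)})\Delta_n(i+1,j;x)-a_n^{(i)}\Delta_n(i+2,j;x)$, the degenerate cases being fixed by (\ref{Delt0}). Since (\ref{pnblock1}) rewrites as $p_{nk+j+1}=(x-b_n^{(j)})p_{nk+j}-a_n^{(j)}p_{nk+j-1}$, inside a block the map $j\mapsto p_{nk+m+j+1}$ obeys the very same recurrence as $j\mapsto\Delta_n(m+2,m+j;x)$, so I would read (\ref{pnblockmp2}) as the interpolation of the interior polynomials between the two block seeds. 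The two extreme cases $j=-1$ and $j=k-1$ of (\ref{pnblockmp2}) both collapse, via (\ref{Delt0}) together with conditions (ii) and (iii), to the factorization (\ref{pnblock5p2}); thus establishing (\ref{pnblockmp2}) is the heart of the matter, and (\ref{pnblock5p2}) comes out as its boundary values.

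For sufficiency I would assume (i)--(iv), set $\theta_m:=p_m$ and define $\pi_k$ by (\ref{Pimka}), and let $\{q_n\}$ be generated by (\ref{pnblock4p2})--(\ref{rnsn1}). That $\{q_n\}$ is a genuine monic OPS is immediate, since $s_n$ is a product of nonzero $a_n^{(j)}$'s, and a degree count using (iii) gives $\deg\pi_k=k$. I would then prove (\ref{pnblockmp2}) by induction on $j$: the base cases $j=-1,0$ are checked by hand, and the inductive step follows because the right-hand side of (\ref{pnblockmp2}), built from the two families $\Delta_n(m+2,m+j;\cdot)$ and $\Delta_n(m+j+3,m+k-1;\cdot)$ together with the product $\prod_{i=1}^{j+1}a_n^{(m+i)}$, satisfies exactly the recurrence in $j$ obeyed by $p_{nk+m+j+1}$, by the two determinant recurrences above. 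Passing from block $n$ to block $n+1$ is where (i), (ii) and (iv) enter: the convention (\ref{convention2}) turns the boundary determinants into $\Delta_{n+1}$'s, and these three conditions are precisely what make the full-period coefficients reduce to $\pi_k(x)-r_n$ and $s_n$, so that the recurrence (\ref{pnblock4p2}) for $q_n$ composed with $\pi_k$ reproduces the step; condition (iii) guarantees that the division by $\eta_{k-1-m}$ in (\ref{pnblockmp2}) is exact.

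For necessity I would start from (\ref{pnblock5p2}). Taking $n=0$ forces $\theta_m=p_m$ (as $q_0\equiv1$), which is the second identity in (\ref{Pimka}); taking $n=1$ and using $q_1(x)=x-r_0$ determines $\pi_k=p_{k+m}/p_m+r_0$, in particular forcing $p_m\mid p_{k+m}$. Since $\{q_n\}$ is assumed to be an OPS, it satisfies a recurrence $q_{n+1}=(x-r_n)q_n-s_nq_{n-1}$; substituting $x\mapsto\pi_k(x)$ and multiplying by $\theta_m$ yields the intrinsic three-term relation $p_{(n+1)k+m}=(\pi_k(x)-r_n)\,p_{nk+m}-s_n\,p_{(n-1)k+m}$ for the subsequence. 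On the other hand, iterating the block recurrence (\ref{pnblock1}) across one full period expresses $p_{(n+1)k+m}$ in terms of the seeds $p_{nk+m}$ and $p_{nk+m-1}$ with coefficients that are determinants $\Delta_n(\cdot,\cdot;\cdot)$. Matching these two expressions, and using that the consecutive OPS members $p_{nk+m}$ and $p_{nk+m-1}$ are coprime so that coefficients may be equated, I would read off that the determinant blocks must be $n$-independent and that $p_m$ must divide them, which are exactly (i)--(iv), while the explicit forms (\ref{Pimka})--(\ref{rnsn1}) follow by comparing leading coefficients.

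The main difficulty I anticipate is the bookkeeping in the period map: one must carefully track the determinants straddling the block boundary, converting $\Delta_n(k+i,k+j;\cdot)$ into $\Delta_{n+1}(i,j;\cdot)$ via (\ref{convention2}), and then disentangle from the single polynomial identity for $p_{(n+1)k+m}$ the four separate scalar and divisibility conditions. The most delicate point is condition (iv): the combination $r_n(x)$ mixes determinants of several different degrees, and proving that it collapses to an $x$-independent constant---rather than merely to a polynomial in $\pi_k$---requires a careful degree count and comparison of leading coefficients, and this is where the hypotheses $k\geq3$ and the normalization $a_0^{(0)}=1$ are used.
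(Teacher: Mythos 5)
First, a framing caveat: this paper does not prove Theorem \ref{teobk1p2} at all --- it is imported verbatim, with citation, from \cite[Theorem 2.1]{MarcioPetronilhoJAT}, so there is no in-paper proof to compare yours against; I can only judge your proposal on its merits and against the determinant machinery of that source (which goes back to Charris, Ismail, and Monsalve \cite{CharrisIsmailMonsalve}). Your toolkit is exactly the right one, and your sufficiency half essentially checks out: the two expansions of $\Delta_n(i,j;x)$ (last row, first column) are correct; writing $R_j$ for the right-hand side of (\ref{pnblockmp2}), the identity $(x-b_n^{(m+j+1)})R_j-a_n^{(m+j+1)}R_{j-1}=R_{j+1}$ does follow from those two expansions (the product $\prod_{i=1}^{j+1}a_n^{(m+i)}$ absorbs the factor $a_n^{(m+j+2)}$ produced by the first-column expansion), so the inner induction on $j$ runs; and the block transition, after inserting (\ref{Pimka}), (\ref{rnsn1}) and condition (iv), reduces to $(x-b_0^{(m)})\Delta_0(1,m-1;x)=\Delta_0(1,m;x)+a_0^{(m)}\Delta_0(1,m-2;x)$, which is again the last-row expansion, with (i) supplying $b_n^{(m)}=b_0^{(m)}$ and (ii)--(iii) closing the loop at $j=k-1$. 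Your reading of the boundary cases $j=-1$ and $j=k-1$ is also correct.

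The genuine gap is in your necessity half, at the step you compress into ``I would read off.'' Matching $p_{(n+1)k+m}=(\pi_k-r_n)p_{nk+m}-s_np_{(n-1)k+m}$ against the iterated block recurrence yields \emph{one} polynomial identity per $n$ involving \emph{three} polynomials, $p_{(n-1)k+m}$, $p_{nk+m-1}$, $p_{nk+m}$; no coefficients can be equated until everything is rewritten over a single coprime pair of consecutive members, say $p_{(n-1)k+m}$, $p_{(n-1)k+m+1}$. Once that is done, the standard argument ($A\,p_{N+1}=B\,p_N$ with $p_N,p_{N+1}$ coprime forces $p_N\mid A$, hence $A=0$ \emph{provided} $\deg A<\deg p_N$) needs a degree bound: the cofactors built from the $\Delta$'s have degree up to $2k-2$, while $\deg p_{(n-1)k+m}=(n-1)k+m$, so the bound fails for $n=1$ always and for $n=2$ unless $m=k-1$ --- precisely the small blocks at which conditions (i), (ii), (iv) are nevertheless asserted. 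So the extraction either needs a separate treatment of the first blocks or a restructured induction in which conditions already established for earlier blocks are fed back in; as sketched, the argument does not close. A smaller inaccuracy: condition (iii) does not come from the pair you name, but from the $n=0$ identity $p_{k+m}=\Delta_0(m+2,m+k-1;x)\,p_{m+1}-a_0^{(m+1)}\Delta_0(m+3,m+k-1;x)\,p_m$ combined with $p_m\mid p_{k+m}$ and coprimality of the consecutive pair $p_m$, $p_{m+1}$. None of this says your plan is wrong --- it is the standard plan for this theorem --- but the necessity direction as written is a statement of intent, and the small-$n$ degree obstruction is a concrete wall anyone executing it will hit.
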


\begin{remark}
Notice that for $j=k-1$, (\ref{pnblockmp2}) reduces to (\ref{pnblock5p2}).
\end{remark}

\begin{theorem}\label{teobk1p2measure}\cite[Theorem 3.4]{MarcioPetronilhoJAT}
Under the conditions of Theorem $\ref{teobk1p2}$, choose $r_0=0$ and assume that
$\{p_n\}_{n\geq0}$ is a monic OPS in the positive-definite sense
with respect to some positive measure $\, {\rm d}\mu\,$. Then
$\{q_n\}_{n\geq0}$ is also a monic OPS in the positive-definite sense,
orthogonal with respect to a measure $\, {\rm d}\tau\,$.
Further, assume that the following conditions hold:
\begin{itemize}
\item[{\rm (i)}] $[\xi,\eta]:={\rm co}\left({\rm supp}({\rm d}\tau)\right)$ is a compact set;
\item[{\rm (ii)}] if $m\geq1$,
$$
\int_\xi^\eta\frac{{\rm d}\tau(x)}{\,|x-\pi_k(z_i)|\,}<\infty
\quad(i=1,2,\ldots,m)\, ,
$$
where $z_1<z_2<\cdots<z_{m}$ are the zeros of $\,\theta_m\,$;
\item[{\rm (iii)}] either $\pi_k(y_{2i-1})\geq \eta $ and
$\pi_k(y_{2i})\leq \xi $ (for all possible $i$) if $k$ is odd, or
$\pi_k(y_{2i-1})\leq \xi $ and $\pi_k(y_{2i})\geq \eta $ if $k$ is
even, where $y_1<\cdots<y_{k-1}$ denote the zeros of
$\pi_k^\prime\,$;
\item[{\rm (iv)}] $\theta_m\eta_{k-1-m}$ and
$\pi_k^{\prime }$ have the same
sign at each point of the set $\pi_k^{-1}(\left[ \xi ,\eta \right] )$.
\end{itemize}
Then the Stieltjes transforms $F(\cdot;{\rm d}\mu)$ and $F(\cdot;{\rm d}\tau)$ are related by
$$
\begin{array}{r}
\displaystyle F(z;{\rm d}\mu)\,=\,\frac{-v_0\,\Delta_0(2,m-1;z)+ \left( \prod_{j=1}^m
a_0^{(j)} \right)\,\eta_{k-1-m}(z)\,F(\pi_k(z);{\rm d}\tau)}{\theta_m(z)}\, ,\qquad \\
\rule{0pt}{1.2em}
z\in\mathbb{C}\setminus\left(\,\pi_k^{-1}\left([\xi,\eta]\right)\cup\{z_1,\ldots,z_m\}\,\right)\;,
\end{array}
$$
where the normalization condition
$\;
v_0:=\int_\xi^\eta{\rm d}\tau=\int_{{\rm supp}({\rm d}\sigma)}{\rm d}\mu=:u_0\;
$
is assumed. Further, up to constant factors, the measure ${\rm d}\mu$
can be obtained from ${\rm d}\tau$ by
\begin{equation}
{\rm d}\mu(x)=\sum_{i=1}^{m}M_i\, \delta(x-z_i)\, {\rm d}x +
\left|\frac{\eta_{k-1-m}(x)}{\theta_m(x)}\right|
\frac{{\rm d}\tau(\pi_k(x))}{\pi_k^\prime(x)}\;,
\label{measure1}
\end{equation}
where if $m\geq1$
\begin{equation}
M_i:=\frac{ v_0\,\Delta_0(2,m-1;z_i)/
\left( \prod_{j=1}^m a_0^{(j)} \right)-\eta_{k-1-m}(z_i)\,F(\pi_k(z_i);{\rm
d}\tau)}{ \theta_m^\prime(z_i)}\geq0
\label{MassMi}
\end{equation}
for all $i=1,\cdots,m$.
The support of ${\rm d}\mu$ is contained in the set
$$\pi_k^{-1}\left(\, [\xi ,\eta ]\, \right)\cup\{z_1,\ldots,z_m\}\, ,$$
an union of $k$ intervals and $m$ possible mass points.
\end{theorem}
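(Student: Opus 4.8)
The plan is to establish the four conclusions---positive-definiteness of $\{q_n\}_{n\geq0}$, the Stieltjes transform identity, the explicit form (\ref{measure1}) of ${\rm d}\mu$, and the description of its support---by feeding the block structure of Theorem \ref{teobk1p2} into Markov's theorem and then applying the Stieltjes--Perron inversion formula. I would begin with positive-definiteness. By Favard's theorem, $\{q_n\}_{n\geq0}$ is positive-definite precisely when its recurrence coefficients satisfy $r_n\in\mathbb{R}$ and $s_n>0$ for $n\geq1$. Since $\{p_n\}_{n\geq0}$ is positive-definite, the block coefficients obey $b_n^{(j)}\in\mathbb{R}$ and $a_n^{(j)}>0$; with $r_0=0$ the formula (\ref{rnsn1}) gives $r_n=r_n(0)\in\mathbb{R}$, while the product $s_n=a_n^{(m)}a_{n-1}^{(m+1)}\cdots a_{n-1}^{(m+k-1)}$ is manifestly positive. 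Hence $\{q_n\}_{n\geq0}$ is orthogonal with respect to a positive measure ${\rm d}\tau$, whose convex hull of support is the compact interval $[\xi,\eta]$ by hypothesis (i).

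The heart of the argument is the transform identity. Writing $p_n^{(1)}$ for the first associated (numerator) polynomials of $\{p_n\}_{n\geq0}$, Markov's theorem gives $F(z;{\rm d}\mu)=\lim_{n\to\infty}p_{kn+m-1}^{(1)}(z)/p_{kn+m}(z)$ uniformly on compact subsets off the support of ${\rm d}\mu$. The denominators factor as $p_{kn+m}(z)=\theta_m(z)\,q_n(\pi_k(z))$ by (\ref{pnblock5p2}), and, since the numerators obey the same recurrence with shifted coefficients, they admit the analogous block representation of (\ref{pnblockmp2}) in which the $q$'s are replaced by the first associated polynomials $q_n^{(1)}$ of $\{q_n\}_{n\geq0}$. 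Dividing and letting $n\to\infty$, the ratios $q_n^{(1)}(\pi_k(z))/q_n(\pi_k(z))$ converge---again by Markov's theorem---to $F(\pi_k(z);{\rm d}\tau)$, while the polynomial prefactors $\theta_m(z)$, $\eta_{k-1-m}(z)$ and the determinants $\Delta_0$ survive as the rational coefficients displayed in the statement. The constant term $-v_0\Delta_0(2,m-1;z)$ and the global factor $\prod_{j=1}^m a_0^{(j)}$ are then fixed by matching the leading Laurent coefficients at $z=\infty$ under the normalization $v_0=u_0$.

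With the transform identity in hand, I would recover ${\rm d}\mu$ by Stieltjes--Perron inversion. Away from the zeros of $\theta_m$, the only singularities of the right-hand side come from $F(\pi_k(z);{\rm d}\tau)$, whose boundary jump across $[\xi,\eta]$ recovers the density of ${\rm d}\tau$ (Sokhotski--Plemelj). By hypothesis (iii) the critical values of $\pi_k$ lie outside $(\xi,\eta)$, so $\pi_k^{-1}([\xi,\eta])$ is a disjoint union of $k$ closed intervals on each of which $\pi_k$ is a monotone bijection onto $[\xi,\eta]$; transporting the jump through the substitution $w=\pi_k(x)$ produces the absolutely continuous density $\bigl|\eta_{k-1-m}(x)/\theta_m(x)\bigr|\,{\rm d}\tau(\pi_k(x))/\pi_k'(x)$ of (\ref{measure1}). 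At each zero $z_i$ of $\theta_m$ the right-hand side has a simple pole---finite by hypothesis (ii), which makes $F(\pi_k(z_i);{\rm d}\tau)$ well defined---and a residue computation there reproduces the mass $M_i$ of (\ref{MassMi}). The stated support is then the union of these $k$ intervals with the $m$ candidate atoms $\{z_1,\ldots,z_m\}$.

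The main obstacle is that the transform identity is purely algebraic and does not by itself force the inverted object to be a positive measure; this is exactly where hypotheses (iii) and (iv) are indispensable. Condition (iii) secures the clean splitting of $\pi_k^{-1}([\xi,\eta])$ into $k$ single-branch intervals, so the jump calculation is unambiguous, and condition (iv) fixes the sign of $\theta_m(x)\eta_{k-1-m}(x)/\pi_k'(x)$ to be constant on $\pi_k^{-1}([\xi,\eta])$, ensuring that the inverted density is genuinely non-negative rather than signed. Once the absolutely continuous part is known to be positive and the support is located, the inequality $M_i\geq0$ needs no separate estimate: $\{p_n\}_{n\geq0}$ is positive-definite, so ${\rm d}\mu$ is a positive measure, and an atom of a positive measure cannot carry negative mass. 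The remaining bookkeeping---matching normalizations and checking that hypothesis (ii) indeed guarantees integrability near each $z_i$---is routine.
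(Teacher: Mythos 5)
First, a point of order: this paper does not prove Theorem \ref{teobk1p2measure} at all; it is stated, with citation, as \cite[Theorem 3.4]{MarcioPetronilhoJAT}, so there is no in-paper proof to compare against. Your outline does follow the same general route as that reference (and as the earlier work of Geronimo--Van Assche \cite{JerVan} and Charris--Ismail--Monsalve \cite{CharrisIsmailMonsalve}): positive-definiteness of $\{q_n\}_{n\geq0}$ via Favard's theorem from (\ref{rnsn1}), a Stieltjes-transform identity obtained from Markov's theorem applied along the subsequence $kn+m$, and Stieltjes--Perron inversion with hypotheses (iii)--(iv) controlling the branches of $\pi_k^{-1}([\xi,\eta])$ and the sign of the density.

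That said, two steps in your sketch have genuine gaps. First, the representation of the numerator polynomials is asserted rather than derived, and it is not what you claim: the first associated polynomials $p_n^{(1)}$ correspond to the block coefficients shifted by one position, and those shifted blocks need not satisfy conditions (i)--(iv) of Theorem \ref{teobk1p2}, so you cannot simply invoke (\ref{pnblockmp2}) ``with $q_n$ replaced by $q_n^{(1)}$''. What is actually needed --- and what produces the specific coefficients $-v_0\,\Delta_0(2,m-1;z)$ and $\big(\prod_{j=1}^m a_0^{(j)}\big)\eta_{k-1-m}(z)$ in the statement --- is an explicit identity expressing $p^{(1)}_{kn+m-1}$ as a polynomial combination of \emph{both} $q_n(\pi_k)$ and $q^{(1)}_{n-1}(\pi_k)$; this is the main algebraic labour of the proof in \cite{MarcioPetronilhoJAT}, and matching Laurent coefficients at $\infty$ cannot substitute for it, since that only fixes constants once the structure of the combination is already known. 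Second, your invocation of Markov's theorem is too strong: for a measure whose support is a union of $k$ intervals, the ratios of numerator to denominator polynomials are guaranteed to converge locally uniformly only on the complement of the \emph{convex hull} of the support (spurious zeros may occur in the gaps), not ``off the support''. The standard repair is to establish the transform identity outside the convex hull and then extend it to $\mathbb{C}\setminus\big(\pi_k^{-1}([\xi,\eta])\cup\{z_1,\ldots,z_m\}\big)$ by analytic continuation, using that a compactly supported measure is determined by its Stieltjes transform near infinity. With these two repairs your plan goes through; the remaining parts (positive-definiteness of $\{q_n\}_{n\geq0}$, the inversion computation, the role of (iv) in making the density non-negative, and $M_i\geq0$ from positivity of ${\rm d}\mu$) are sound.
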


\begin{remark}\label{remarkA}
In statement (i), ${\rm co}(A)$ means the convex hull of a set $A$.
Under the conditions of Theorem \ref{teobk1p2measure},
if ${\rm d}\tau$ is an absolutely continuous
measure with density $w_{\tau}$, then the
absolutely continuous part of ${\rm d}\mu$ has density
$$
w_\mu(x):=\left|\frac{\eta_{k-1-m}(x)}{\theta_m(x)}\right|\,w_{\tau}(\pi_k(x))
$$
with support contained in an union of at most $k$ closed intervals,
and it may appear mass points at the zeros of $\theta_m$.
\end{remark}

In \cite[Section 3]{KMZ1} we stated several results concerning OPS and polynomial mappings in the framework of the theory of semiclassical OPS. In particular, in the proof of part (ii) of \cite[Theorem 3.1]{KMZ1}, we implicitly proved the following
\begin{theorem}\label{teoStiltjseries}
Under the conditions of Theorem $\ref{teobk1p2}$,
let ${\bf u}$ and ${\bf v}$ be the moment regular functionals with respect to which
$\{p_n\}_{n\geq0}$ and $\{q_n\}_{n\geq0}$ are monic OPS, respectively.
Let $S_{{\bf u}}(z):=-\sum_{n\geq0}u_n/z^{n+1}$ and $S_{{\bf v}}(z):=-\sum_{n\geq0}v_n/z^{n+1}$
(where $u_n:=\langle{\bf u},x^n\rangle$ and $v_n:=\langle{\bf v},x^n\rangle$) be the
corresponding (formal) Stieltjes series, respectively. Suppose that
there exist polynomials $\widetilde{\Phi}$, $\widetilde{C}$, and $\widetilde{D}$, such that
$$
\widetilde{\Phi}(z) S_{{\bf v}}'(z)=\widetilde{C}(z)S_{{\bf v}}(z)+\widetilde{D}(z)\;.
$$
Then $S_{{\bf u}}(z)$ fulfils
$$
\Phi_1(z)S_{{\bf u}}^\prime(z)=C_1(z)S_{{\bf u}}(z)+D_1(z)\;,
$$
where $\Phi_1$, $C_1$, and $D_1$ are polynomials given explicitly by
$$
\begin{array}{l}
\Phi_1:=v_0\theta_m\eta_{k-1-m}\sigma_{\pi_k}[\widetilde{\Phi}]\; ,\\ [0.4em]
C_1:=v_0\left(\eta_{k-1-m}^\prime\theta_m-v_0\theta_m^\prime\eta_{k-1-m}
\sigma_{\pi_k}[\widetilde{\Phi}]+\eta_{k-1-m}
\theta_m\pi_k^\prime\sigma_{\pi_k}[\widetilde{C}]\right)\; ,\\ [0.5em]
D_1:=u_0v_0\left(\Delta_0(2,m-1,\cdot)\eta_{k-1-m}^\prime-\Delta_0^\prime(2,m-1,\cdot)\eta_{k-1-m}\right)
\sigma_{\pi_k}[\widetilde{\Phi}]\\ [0.25em]
\qquad\qquad +u_0\left(\kappa_m\eta_{k-1-m}\sigma_{\pi_k}[\widetilde{D}]+v_0\Delta_0(2,m-1,\cdot)
\sigma_{\pi_k}[\widetilde{C}]\right)\eta_{k-1-m}\pi_k^\prime\;,
\end{array}
$$
and $\sigma_{\pi_k}[f](z):=f\big(\pi_k(z)\big)$ for each polynomial $f$.
\end{theorem}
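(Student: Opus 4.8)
The plan is to transport the first-order functional equation satisfied by $S_{\mathbf{v}}$ across the polynomial mapping $\pi_k$ so as to produce the corresponding equation for $S_{\mathbf{u}}$. The decisive ingredient is the identity expressing $S_{\mathbf{u}}$, with polynomial coefficients, in terms of $1$ and $S_{\mathbf{v}}\big(\pi_k(\cdot)\big)$; this is the formal-power-series counterpart of the Stieltjes-transform relation of Theorem~\ref{teobk1p2measure}. Writing $\kappa_m:=\prod_{j=1}^{m}a_0^{(j)}$, it has the form
\begin{equation}
\theta_m(z)\,S_{\mathbf{u}}(z)=P(z)+c\,\eta_{k-1-m}(z)\,S_{\mathbf{v}}\big(\pi_k(z)\big),
\label{plankey}
\end{equation}
where $P$ is a constant multiple of $\Delta_0(2,m-1;z)$ and $c$ is a nonzero constant, both determined by the normalizations $u_0$, $v_0$ and $\kappa_m$; \eqref{plankey} reduces to the identity of Theorem~\ref{teobk1p2measure} in the positive-definite case $u_0=v_0$, and at the purely formal (regular) level it is exactly what is proved, implicitly, in the proof of \cite[Theorem~3.1(ii)]{KMZ1}. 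Note that $S_{\mathbf{v}}\big(\pi_k(z)\big)$ is a well-defined formal series in $1/z$ because $\pi_k$ is monic of degree $k$. I would take \eqref{plankey} as the point of departure.

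I would then differentiate \eqref{plankey} with respect to $z$, using $\tfrac{\mathrm{d}}{\mathrm{d}z}S_{\mathbf{v}}\big(\pi_k(z)\big)=\pi_k'(z)\,S_{\mathbf{v}}'\big(\pi_k(z)\big)$, obtaining a second relation that ties $S_{\mathbf{u}}$ and $S_{\mathbf{u}}'$ to $S_{\mathbf{v}}\big(\pi_k(z)\big)$ and $S_{\mathbf{v}}'\big(\pi_k(z)\big)$. Evaluating the hypothesis $\widetilde{\Phi}S_{\mathbf{v}}'=\widetilde{C}S_{\mathbf{v}}+\widetilde{D}$ at the point $\pi_k(z)$ gives
\begin{equation}
\sigma_{\pi_k}[\widetilde{\Phi}](z)\,S_{\mathbf{v}}'\big(\pi_k(z)\big)=\sigma_{\pi_k}[\widetilde{C}](z)\,S_{\mathbf{v}}\big(\pi_k(z)\big)+\sigma_{\pi_k}[\widetilde{D}](z).
\label{planhyp}
\end{equation}
At this stage the two quantities $S_{\mathbf{v}}\big(\pi_k(z)\big)$ and $S_{\mathbf{v}}'\big(\pi_k(z)\big)$ play the role of unknowns, constrained by \eqref{plankey}, its derivative, and \eqref{planhyp}.

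The core of the argument is the elimination of these two unknowns. From \eqref{plankey} one solves for $S_{\mathbf{v}}\big(\pi_k(z)\big)$ (which introduces $\eta_{k-1-m}$ in a denominator), and from the differentiated identity one solves for $S_{\mathbf{v}}'\big(\pi_k(z)\big)$ (introducing $\eta_{k-1-m}\pi_k'$); substituting both into \eqref{planhyp} and clearing the denominators by multiplying through by $\eta_{k-1-m}^{2}\,\pi_k'$ yields a single first-order relation of the desired shape $\Phi_1 S_{\mathbf{u}}'=C_1 S_{\mathbf{u}}+D_1$. Reading off the coefficient of $S_{\mathbf{u}}'$ gives $\Phi_1=v_0\theta_m\eta_{k-1-m}\sigma_{\pi_k}[\widetilde{\Phi}]$, while the terms proportional to $S_{\mathbf{u}}$ and the inhomogeneous terms assemble into $C_1$ and $D_1$; the Wronskian-type combinations $\eta_{k-1-m}'\theta_m-\eta_{k-1-m}\theta_m'$ in $C_1$ and $\Delta_0(2,m-1;\cdot)\,\eta_{k-1-m}'-\Delta_0'(2,m-1;\cdot)\,\eta_{k-1-m}$ in $D_1$ arise precisely from differentiating and then clearing the $\eta_{k-1-m}$-denominator, while the factors $\pi_k'$ come from the chain rule.

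The main difficulty is organizational rather than conceptual: after multiplying by $\eta_{k-1-m}^{2}\pi_k'$ one must check that every surviving coefficient is a genuine polynomial (no residual $\eta_{k-1-m}$ or $\pi_k'$ left in a denominator) and that the constants $u_0$, $v_0$, $\kappa_m$ distribute so as to reproduce the stated $\Phi_1$, $C_1$, $D_1$ exactly. The only step demanding care beyond this bookkeeping is the justification of \eqref{plankey} at the formal, not necessarily positive-definite, level, which is why the result is phrased as a consequence of the computation underlying \cite[Theorem~3.1(ii)]{KMZ1} rather than of the measure-theoretic Theorem~\ref{teobk1p2measure}.
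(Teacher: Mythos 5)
Your proposal is correct and takes essentially the same route as the paper, which offers no independent proof but attributes the result to the computation carried out implicitly in the proof of \cite[Theorem 3.1(ii)]{KMZ1}: namely, start from the formal Stieltjes-series identity $\theta_m S_{{\bf u}}=-u_0\,\Delta_0(2,m-1;\cdot)+\big(u_0\kappa_m/v_0\big)\,\eta_{k-1-m}\,S_{{\bf v}}\circ\pi_k$, differentiate it, substitute into the hypothesis evaluated at $\pi_k(z)$, and clear denominators by $\eta_{k-1-m}^2\pi_k^\prime$. Carrying out your elimination with these precise constants reproduces the stated $\Phi_1$, $C_1$, $D_1$ exactly (indeed it also exposes two evident misprints in the paper's displayed $C_1$, where $\sigma_{\pi_k}[\widetilde{\Phi}]$ should multiply the whole Wronskian combination $\eta_{k-1-m}^\prime\theta_m-\theta_m^\prime\eta_{k-1-m}$ and the inner factor $v_0$ is spurious, as confirmed by the specializations used in the proofs of Theorems \ref{T3} and \ref{T3sieved1}).
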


\begin{remark}
A detailed study of quadratic polynomial mappings has been presented in \cite{PacoZeLAA,PacoZePM}.
Thus from now on (even if not stated explicitly) we assume that $k\geq3$.
\end{remark}

Besides the basic facts concerning semiclassical OPS given in \cite[Section 2]{KMZ1}, we recall that such families are characterized by a structure relation and a linear homogeneous second order ODE. Indeed, let $\{p_n\}_{n\geq0}$ be a monic semiclassical OPS. This means that $\{p_n\}_{n\geq0}$  is an OPS with respect to a linear functional ${\bf u}:\mathcal{P}\to\mathbb{C}$ ($\mathcal{P}$ being the space of all polynomials with complex coefficients) which fulfils a distributional differential equation of Pearson type
$$
D(\Phi{\bf u})=\Psi{\bf u}\;,
$$
where $\Phi$ and $\Psi$ are nonzero polynomials (i.e., they do not vanish identically),
and $\deg\Psi\geq1$.
According to the theory presented by Maroni in \cite{Maroni}, $\{p_n\}_{n\geq0}$ fulfills the structure relation
\begin{equation}\label{RE}
\Phi(x)p_n^\prime(x)=M_n(x)p_{n+1}(x)+N_n(x)p_n(x)\;,\quad n=0,1,2,\ldots\;,
\end{equation}
where $M_n$ and $N_n$ are polynomials that may depend of $n$,
but they have degrees (uniformly) bounded by a number independent of $n$,
which can be computed successively using the relations
\begin{equation}\label{RRMN}
\begin{array}l
N_n=-C-N_{n-1}-(x-\beta_n)M_n\\
\gamma_{n+1}M_{n+1}=-\Phi+\gamma_nM_{n-1}+(x-\beta_n)(N_{n-1}-N_n)\,,
\end{array}
\end{equation}
with initial conditions $N_{-1}:=-C$, $M_{-1}:=0$, and $M_0:=u_0^{-1}D$.
Here $\beta_n$ and $\gamma_n$ are the parameters appearing in the three-term recurrence
relation (\ref{TTRRint}), $u_0:=\langle{\bf u},1\rangle$, and $C$ and $D$ are polynomials, being $C:=\Psi-\Phi'$, and
the definition of $D$ may be seen in \cite[Section 2.2]{KMZ1}.
The structure relation (\ref{RE}) is a characteristic property of semiclassical OPS.
Another characterization of semiclassical OPS is the second order ODE
\begin{equation}\label{ED2O}
J_n(x)p_n^{\prime\prime}(x)+K_n(x)p_n^\prime(x)+L_n(x)p_n(x)=0\;,
\end{equation}
where $J_n$, $K_n$, and $L_n$ are polynomials that may depend of $n$,
but their degrees are (uniformly) bounded by a number independent of $n$.
Moreover, if $\{p_n\}_{n\geq0}$ satisfies the structure relation (\ref{RE})--(\ref{RRMN})
then $J_n$, $K_n$, and $L_n$ are given by
\begin{equation}\label{PED2O}
\begin{array}l
J_n:=\Phi M_n\\
K_n:=W\left(M_n,\Phi\right)+C M_n=\Psi M_n-\Phi M_n'\\
L_n:=W(N_n,M_n)+\left(\gamma_{n+1}M_nM_{n+1}-N_n(N_n+C)\right)M_n/\Phi\,,\\
\end{array}
\end{equation}
where $W(f,g):=f g^\prime-f^\prime g$.

\section{Sieved ultraspherical polynomials}
\label{poly-semi-sieved}

Let $\{C_n^\lambda\}_{n\geq0}$ be the ultraspherical (or Gegenbauer) OPS,
defined by the recurrence relation
$$
2(n+\lambda)xC_n^\lambda(x)=(n+1)C_{n+1}^\lambda(x)
+(n+2\lambda-1)C_{n-1}^\lambda(x)\; ,\quad n\in\mathbb{N}\,,
$$
with initial conditions $C_0^\lambda(x):=1$ and $C_1^\lambda(x):=2\lambda x$, where $\lambda\neq0$.
This definition appears in \cite[Equation (4.7.17)]{Szego}, where the condition $\lambda>-1/2$ is assumed,
so that the polynomials are orthogonal in the positive-definite sense.
If $\lambda=0$ then a compatible definition is \cite[Equation (4.7.8)]{Szego}
$$
C_0^0(x):=1\, ,\quad C_n^0(x):=T_n(x)=\frac{n}{2}
\lim_{\substack{\lambda\to0 \\ \lambda\neq0}}\frac{C_n^\lambda(x)}{\lambda}
\;,\quad n\in\mathbb{N}\;.
$$
Here we allow orthogonality with respect to a quasi-definite (or regular) functional in $\mathcal{P}$,
not necessarily positive-definite.
Therefore we assume that the range of values of the parameter $\lambda$ is
\begin{equation}\label{lambda-regular}
\lambda\in\mathbb{C}\setminus\{-n/2:n\in\mathbb{N}\}\;.
\end{equation}
(This follows e.g. from \cite[Table 1]{KMZ1}, noticing that $C_n^\lambda$ is, up to normalization, a Jacobi polynomial $P_n^{(\alpha,\beta)}$ with parameters $\alpha=\beta=\lambda-1/2$.)
We recall the definition of the sieved ultraspherical polynomials, as presented in
\cite{AlSalamAllawayAskey} and \cite{Ismail-sieved1}.
Rogers \cite{Rogers1,Rogers2} studied the OPS $\{C_n(\cdot;\beta|q)\}_{n\geq0}$ defined by
$C_0(x;\beta|q):=1$, $C_1(x;\beta|q):=2x(1-\beta)/(1-q)$, and
$$
2x(1-\beta q^n)C_n(x;\beta|q)
=(1-q^{n+1})C_{n+1}(x;\beta|q)+(1-\beta^2q^{n-1})C_{n-1}(x;\beta|q)\;,
$$
where $\beta$ and $q$ are real or complex parameters, and $|q|<1$.
Nowadays these polynomials are called continuous $q-$ultraspherical polynomials,
since they generalize $\{C_n^\lambda\}_{n\geq0}$ in the following sense (see \cite{Ismail-sieved1}):
$$
\lim_{q\to1}C_n(x;q^\lambda|q)=C_n^\lambda(x)\; .
$$
Let $\{c_n(\cdot;\beta|q)\}_{n\geq0}$ be an OPS
obtained renormalizing $\{C_n(\cdot;\beta|q)\}_{n\geq0}$, so that
$$
C_n(\cdot;\beta|q)=\frac{(\beta^2;q)_n}{(q;q)_n}\,c_n(\cdot;\beta|q)\; ,
$$
where $(a;q)_0:=1$ and $(a;q)_n:=\prod_{j=1}^n(1-aq^{j-1})$ for each $n\in\mathbb{N}$.
The sieved OP defined by Al-Salam, Allaway, and Askey \cite{AlSalamAllawayAskey}
are limiting cases of the polynomials $C_n(\cdot;\beta|q)$ and $c_n(\cdot;\beta|q)$.
Indeed, fix $k\in\mathbb{N}$ and let $\omega_k$ be an $k$th root of the unity, i.e.,
$$
\omega_k:=e^{2\pi i/k}\,. 
$$
Setting $\beta=s^{\lambda k}$ and $q=s\omega_k$,
the OPS $\{c_n^\lambda(\cdot;k)\}_{n\geq0}$ defined by
$$
c_n^\lambda(x;k):=\lim_{s\to1}c_n(x;s^{\lambda k}|s\omega_k)
$$
is the sequence of the {\it sieved ultraspherical polynomials of the first kind};
and setting $\beta=s^{\lambda k+1}\omega_k$ and $q=s\omega_k$,
the OPS $\{B_n^\lambda(\cdot;k)\}_{n\geq0}$ defined by
$$
B_n^\lambda(x;k):=\lim_{s\to1}C_n(x;s^{\lambda k+1}\omega_k|s\omega_k)
$$
is the sequence of the {\it sieved ultraspherical polynomials of the second kind}.

For $\lambda>-1/2$ the sieved ultraspherical polynomials are orthogonal in the positive-definite sense.
In such a case, the orthogonality measures
were given in \cite[Theorems 1 and 2]{AlSalamAllawayAskey}.

\section{On sieved ultraspherical OP of the second kind}

\subsection{Description via a polynomial mapping}
In \cite{CharrisIsmail-siev2}, Charris and Ismail proved that
$\{B_n^\lambda(\cdot;k)\}_{n\geq0}$ satisfies
\begin{equation}\label{sievedBn1}
B_{kn+j}^{\lambda}(x;k)=U_j(x)C_n^{\lambda+1}(T_k(x))
+U_{k-j-2}(x)C_{n-1}^{\lambda+1}(T_k(x))
\end{equation}
for $j=0,1,\ldots,k-1$ and $n=1,2,\ldots$,
where $\{T_n\}_{n\geq0}$ and $\{U_n\}_{n\geq0}$ are the OPS of the
Chebychev polynomials of the first and the second kind, respectively, defined by
$$T_n(x):=\cos(n\theta)\; ,\quad
U_n(x):=\frac{\sin(n+1)\theta}{\sin\theta}\quad (x=\cos\theta\;,\;0<\theta<\pi)\;.$$
Since $U_{-1}:=0$, then for $j=k-1$ (\ref{sievedBn1}) reduces to
\begin{equation}\label{sievedBn2}
B_{kn+k-1}^{\lambda}(x;k)=U_{k-1}(x)C_n^{\lambda+1}(T_k(x))\; ,\quad n=0,1,2,\ldots\; .
\end{equation}
Relations (\ref{sievedBn1}) and (\ref{sievedBn2}) establish a connection between sieved OP
of the second kind and OPS obtained via a polynomial mapping as described in \cite[Section 2]{KMZ1}.
This connection was established in a different way by
Geronimo and Van Assche \cite{JerVan}, and also in \cite{MarcioPetronilhoJAT}
(see also \cite{CharrisIsmailMonsalve}).
Next we briefly describe such connection following
the presentation in \cite[Section 5.2]{MarcioPetronilhoJAT}.
Taking for $\{p_n\}_{n\geq0}$ the monic OPS corresponding
to $\{B_n^\lambda(\cdot;k)\big\}_{n\geq0}$, so that
\begin{equation}\label{Sieved1}
p_{kn+j}(x)=\frac{n!}{2^{kn+j}(\lambda+1)_n}B_{kn+j}^{\lambda}(x;k) 
\end{equation}
$(n=0,1,2,\ldots\,;\;j=0,1,\ldots,k-1)$, where
$(\alpha)_n$ is the shifted factorial, defined by
$(\alpha)_0:=1$ and $(\alpha)_n:=\alpha(\alpha+1)\cdots(\alpha+n-1)$ whenever $n\geq1$,
and using the three-term recurrence relation for $\{B_n^\lambda(\cdot;k)\}_{n\geq0}$
given in \cite{AlSalamAllawayAskey}, we see that the coefficients appearing in the (block)
three-term recurrence relation (\ref{pnblock1}) for $\{p_n\}_{n\geq0}$ are
$$
\begin{array}{c}
b_{n}^{(j)}:=0\quad (0\leq j\leq k-1)\; ,\quad
a_{n}^{(j)}:=\frac14\quad (1\leq j\leq k-2)\; , \\ [0.5em]
\displaystyle
a_{n+1}^{(0)}:=\frac{n+1}{4(n+1+\lambda)} \, ,\quad
a_n^{(k-1)}:=\frac{n+1+2\lambda}{4(n+1+\lambda)} \quad
\end{array}
$$
for each $n\in\mathbb{N}_0$. Hence, for every $n\in\mathbb{N}_0$ and $0\leq j\leq k-1$, we compute
$$\Delta_n(1,j-1;x)=\widehat{U}_{j}(x)\; ,\quad
\Delta_n(j+2,k-2;x)=\widehat{U}_{k-j-2}(x)\; ,$$
where $\widehat{T}_n$ and $\widehat{U}_n$
denote the monic polynomials corresponding to $T_n$ and $U_n$,
\begin{equation}\label{TUM}
\widehat{T}_n(x):=2^{1-n}\,T_n(x)\; ,\quad
\widehat{U}_n(x):=2^{-n}U_n(x)\;,\quad n\in\mathbb{N}\;,
\end{equation}
and so one readily verifies that the hypothesis of Theorem \ref{teobk1p2}
are fulfilled, with $m=k-1$ and being the polynomial mapping described by the polynomials
\begin{equation}\label{poly-map-BnL}
\pi_k(x):=\widehat{U}_{k}(x)-\mbox{$\frac14$}\,\widehat{U}_{k-2}(x)=\widehat{T}_k(x)\, , \quad
\theta_{k-1}(x):=\widehat{U}_{k-1}(x) \,,\quad \eta_0(x):=1 \, .
\end{equation}
Moreover, $\{q_n\}_{n\geq0}$ is the monic OPS characterized by
$$
r_0=r_n=0\, , \quad s_n=4^{2-k}\, a_n^{(0)}a_n^{(k-1)}
=\frac{1}{4^k}\frac{n(n+1+2\lambda)}{(n+\lambda)(n+1+\lambda)}
\quad (n\in\mathbb{N})\;,
$$
meaning that, indeed, $q_n$ is up to an affine change of variables
the ultraspherical polynomial of degree $n$ with parameter $\lambda+1$,
\begin{equation}\label{qnCnlambda}
q_n(x)=\frac{n!}{2^{kn}(\lambda+1)_n}\, C_n^{\lambda+1}\left(2^{k-1}x\right)\;.
\end{equation}
Thus (\ref{sievedBn1}) and (\ref{sievedBn2}) follow immediately from Theorem \ref{teobk1p2}.
For $\lambda>-1/2$ the orthogonality measure for
$\{B_n^\lambda(\cdot;k)\}_{n\geq0}$ given in \cite{AlSalamAllawayAskey} may
be computed easily using Theorem \ref{teobk1p2measure},
being absolutely continuous with weight function
$$
w(x):=(1-x^2)^{\lambda+\frac12}\big|U_{k-1}(x)\big|^{2\lambda}\,,\quad -1< x<1\;.
$$
Indeed, in this situation, the masses at the zeros of $\theta_m\equiv\widehat{U}_{k-1}$ given by (\ref{MassMi}) all vanish, and so the measure given by (\ref{measure1}) becomes absolutely continuous, with a density function given by Remark \ref{remarkA}. For details, see \cite[Section 5.2]{MarcioPetronilhoJAT}.

\subsection{Classification}

According to a result by Bustoz, Ismail,
and Wimp \cite{BustozIsmailWimp}, $B_n^\lambda(\cdot;k)$ is a solution of a
linear second order ODE with polynomial coefficients, being the degrees of these polynomials
(uniformly) bounded by a number independent of $n$.
Therefore, $\{B_n^\lambda(\cdot;k)\}_{n\geq0}$ is a semiclassical OPS.
In the next theorem we state the semiclassical character of $\{B_n^\lambda(\cdot;k)\}_{n\geq0}$
in an alternative way and we give its (precise) class.
It is worth mentioning that usually the ODE is not the most efficient way
to obtain the class of a semiclassical OPS.
Often, being ${\bf u}$ the regular functional for the given (semiclassical) OPS, the
differential equation fulfilled by the corresponding
(formal) Stieltjes series $S_{{\bf u}}(z):=-\sum_{n\geq0}u_n/z^{n+1}$
allow us to obtain the class in a more simpler way.
In the next theorem we determine the class of $\{B_n^\lambda(\cdot;k)\}_{n\geq0}$ using the associated Stieltjes series and the results stated in \cite[Section 3]{KMZ1}.

\begin{theorem}\label{T3}
Let $\{p_n\}_{n\geq0}$ be the monic OPS corresponding to the sieved polynomials $\{B_{n}^{\lambda}(\cdot;k)\}_{n\geq0}$ given by (\ref{Sieved1}), being $\lambda\in\mathbb{C}\setminus\{-n/2\,:\,n\in\mathbb{N}\}$ and and $k\geq3$. Let ${\bf u}$ be the regular functional with respect to which $\{p_n\}_{n\geq0}$ is an OPS. Then
\begin{equation}\label{Eq-de-u}
D(\Phi{\bf u})=\Psi{\bf u}\;,
\end{equation}
where $\Phi$ and $\Psi$ are polynomials given by
\begin{equation}\label{pol-de-u}
\Phi(x):=(1-x^2)\widehat{U}_{k-1}(x)\;,\quad
\Psi(x):=-\big(2x\widehat{U}_{k-1}(x)+k(2\lambda+1) \widehat{T}_{k}(x)\big)\,.
\end{equation}
Moreover, the corresponding formal Stieltjes series $S_{{\bf u}}(z)$ fulfils
\begin{equation}\label{Stieltjes-monic-Bnlambda}
\Phi(z)S_{{\bf u}}^\prime(z)=C(z)S_{{\bf u}}(z)+D(z)\;,
\end{equation}
where $C$ and $D$ are polynomials given by
\begin{equation}\label{PhiC}
C(z):=-\big(z\widehat{U}_{k-1}(z)+2k\lambda\widehat{T}_k(z)\big)\,,\quad
D(z):=-2u_0\big(\widehat{U}_{k-1}(z)+k\lambda\widehat{T}_{k-1}(z)\big)\,.
\end{equation}
As a consequence, if $\lambda\in\mathbb{C}\setminus\{-n/2:n\in\mathbb{N}_0\}$
then $\{B_{n}^{\lambda}(\cdot;k)\}_{n\geq0}$ is a semiclassical OPS of class $k-1$.
If $\lambda=0$ then $\{B_{n}^{0}(\cdot;k)\}_{n\geq0}$ is (up to normalization) the
Chebychev OPS of the second kind, and so a classical OPS.
\end{theorem}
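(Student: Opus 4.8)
The plan is to establish the Stieltjes equation (\ref{Stieltjes-monic-Bnlambda}) first, by transporting through Theorem~\ref{teoStiltjseries} the analogous equation satisfied by the component family $\{q_n\}_{n\geq0}$, and then to read off the Pearson equation (\ref{Eq-de-u}) and the class from it. By (\ref{qnCnlambda}) the functional $\mathbf{v}$ for which $\{q_n\}_{n\geq0}$ is a monic OPS is, up to an affine change of variable, the classical ultraspherical functional of parameter $\lambda+1$ supported on $[-2^{1-k},2^{1-k}]$; hence it is semiclassical of class~$0$, and (normalizing $v_0=1$) its Stieltjes series obeys $\widetilde{\Phi}S_{\mathbf{v}}'=\widetilde{C}S_{\mathbf{v}}+\widetilde{D}$ with Pearson pair $\widetilde{\Phi}(y)=2^{2-2k}-y^2$, $\widetilde{\Psi}(y)=-(2\lambda+3)y$, so that $\widetilde{C}=\widetilde{\Psi}-\widetilde{\Phi}'=-(2\lambda+1)y$ and $\widetilde{D}$ is a constant. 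This triple is the input for Theorem~\ref{teoStiltjseries}.

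Next I would substitute the mapping data of (\ref{poly-map-BnL}) --- $m=k-1$, $\pi_k=\widehat{T}_k$, $\theta_{k-1}=\widehat{U}_{k-1}$, $\eta_0\equiv1$ --- together with $\Delta_0(2,k-2;\cdot)=\widehat{U}_{k-2}$ into the explicit formulas for $\Phi_1,C_1,D_1$. The computation is driven by three Chebyshev identities: $\sigma_{\pi_k}[\widetilde{\Phi}](z)=2^{2-2k}-\widehat{T}_k(z)^2=(1-z^2)\widehat{U}_{k-1}(z)^2$ (from $1-T_k^2=(1-x^2)U_{k-1}^2$), $\widehat{T}_k'=k\,\widehat{U}_{k-1}$ (from $T_k'=kU_{k-1}$), and the derivative rule $(1-x^2)\widehat{U}_{k-1}'=x\widehat{U}_{k-1}-k\widehat{T}_k$. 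Carrying these through, each of $\Phi_1$, $C_1$, $D_1$ acquires the common factor $\widehat{U}_{k-1}^2$; cancelling it and applying the derivative rule to simplify $C_1/\widehat{U}_{k-1}^2$ reproduces exactly the triple $(\Phi,C,D)$ of (\ref{pol-de-u})--(\ref{PhiC}), which proves (\ref{Stieltjes-monic-Bnlambda}). The divisibility of $D_1$ is the delicate point here: it rests on the further fact that $\widehat{U}_{k-2}\widehat{T}_k$ is congruent to a constant modulo $\widehat{U}_{k-1}$ (equivalently $U_{k-2}T_k=-1$ at the zeros of $U_{k-1}$). Alternatively, once $\Phi$ and $C$ are in hand, $D=\Phi S_{\mathbf{u}}'-CS_{\mathbf{u}}$ is automatically a polynomial of degree $k-1$ whose value can be computed from the leading moments.

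To obtain the Pearson equation (\ref{Eq-de-u}) I would use the relation $\Psi=C+\Phi'$, valid for the reduced triple. Since $\Phi'=-2x\widehat{U}_{k-1}+(1-x^2)\widehat{U}_{k-1}'=-x\widehat{U}_{k-1}-k\widehat{T}_k$ by the derivative rule, adding $C=-(x\widehat{U}_{k-1}+2k\lambda\widehat{T}_k)$ gives $\Psi=-\bigl(2x\widehat{U}_{k-1}+k(2\lambda+1)\widehat{T}_k\bigr)$, as claimed.

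Finally, for the class I would show that $(\Phi,\Psi)$ --- equivalently the triple $(\Phi,C,D)$ --- is irreducible. Since $\deg\Phi=k+1$ and $\deg\Psi\leq k$, the class is at most $\max(\deg\Phi-2,\deg\Psi-1)=k-1$, and it equals $k-1$ precisely when no factor $(x-c)$ can be removed, i.e.\ when $\Phi$, $C$, $D$ share no common zero. The zeros of $\Phi$ are $\pm1$ and the $k-1$ zeros of $\widehat{U}_{k-1}$. At a zero $c$ of $\widehat{U}_{k-1}$ one has $C(c)=-2k\lambda\,\widehat{T}_k(c)\neq0$ for $\lambda\neq0$, because $\widehat{T}_k$ and $\widehat{U}_{k-1}$ are coprime; and $|C(\pm1)|=2^{1-k}k\,|1+2\lambda|$ is nonzero for $\lambda\neq-1/2$. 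Thus for $\lambda\notin\{-n/2:n\in\mathbb{N}_0\}$, which excludes both $0$ and $-1/2$, there is no common zero, the triple is irreducible, and the class is exactly $k-1$. This minimality step is the main obstacle, since it is what pins the class down and forces a careful evaluation of $C$ at all $k+1$ zeros of $\Phi$. For the remaining value $\lambda=0$ every zero of $\widehat{U}_{k-1}$ becomes common and in fact $\widehat{U}_{k-1}$ divides $\Phi$, $C$ and $D$ simultaneously; cancelling it collapses (\ref{Stieltjes-monic-Bnlambda}) to $(1-z^2)S_{\mathbf{u}}'(z)=-zS_{\mathbf{u}}(z)-2u_0$, the class-$0$ Stieltjes equation of the Chebychev OPS of the second kind, which settles the last assertion.
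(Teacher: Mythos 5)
Your proposal is correct and follows essentially the same route as the paper's own proof: start from the Stieltjes equation of the (rescaled) ultraspherical functional $\mathbf{v}$, transport it through Theorem~\ref{teoStiltjseries} with the mapping data of (\ref{poly-map-BnL}), use the Chebyshev identities (\ref{CH12345}) to exhibit and cancel the common factor $\widehat{U}_{k-1}^2$, obtain (\ref{Eq-de-u}) from $\Psi=C+\Phi'$, and prove the class statement by checking that $C$ does not vanish at any zero of $\Phi$ (with the same $\lambda=0$ reduction to Chebyshev of the second kind) --- you even make explicit the key congruence $U_{k-2}T_k\equiv-1\pmod{U_{k-1}}$ that the paper hides inside ``straightforward computations''. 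The only weak spot is your parenthetical alternative for $D$, namely that $\Phi S_{\mathbf{u}}'-CS_{\mathbf{u}}$ is ``automatically a polynomial'': this is circular, since it presupposes precisely the divisibility of $D_1$ by $\widehat{U}_{k-1}^2$ in question, but your primary argument does not rely on it.
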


\begin{proof}
Let $\textbf{v}^{\lambda+1}$ be the regular functional
associated with the ultraspherical OPS $\{C_n^{\lambda +1}\}_{n\geq0}$,
and let $\textbf{v}$ be the regular functional associated with $\{q_n\}_{n\geq0}$
defined by (\ref{qnCnlambda}).
The relation between the corresponding formal Stieltjes series
$S_{{\bf v}}(z):=\sum_{n\geq0}v_n/z^{n+1}$ and $S_{{\bf v}^{\lambda+1}}(z):=\sum_{n\geq0}v_n^{\lambda+1}/z^{n+1}$
(where $v_n:=\langle{\bf v},x^n\rangle$ and $v_n^{\lambda+1}:=\langle{\bf v}^{\lambda+1},x^n\rangle$, $n\geq0$) is
$$S_{{\bf v}}(z)=2^{k-1}S_{{\bf v}^{\lambda+1}}\left(2^{k-1} z\right)\;.$$
Therefore, using the formal ordinary differential equation fulfilled by $S_{{\bf v}^{\lambda+1}}$
(cf. e.g. \cite{Maroni}, or see \cite[Eq. (2.4) and Table 2]{KMZ1}), we easily deduce
\begin{equation}\label{FS3}
\widetilde{\Phi}(z)S_{{\bf v}}^\prime(z)
=\widetilde{C}(z)S_{{\bf v}}(z)+\widetilde{D}(z)\,,
\end{equation}
where $\widetilde{\Phi}(x):=-x^2+4^{1-k}$, $\widetilde{C}(x):=-(2\lambda+1)x$, and
$\widetilde{D}(x):=-2(\lambda+1)v_0$.
Our aim is to prove that $\textbf{u}$ is semiclassical of class $k-1$.
Indeed, by Theorem \ref{teoStiltjseries},
\begin{equation}\label{Phi1C1D1c}
\Phi_1(z)S_{{\bf u}}^\prime(z)=C_1(z)S_{{\bf u}}(z)+D_1(z)\;,
\end{equation}
with $\Phi_1$, $C_1$, and $D_1$ given by
$$\begin{array}{l}
\Phi_1(x):=v_0\theta_{k-1}(x)\widetilde{\Phi}(\pi_k(x))\; , \\
C_1(x):=-v_0\theta_{k-1}^\prime(x)\widetilde{\Phi}(\pi_k(x))
+v_0\theta_{k-1}(x)\pi_k^\prime(x)\widetilde{C}(\pi_k(x))\\
D_1(x):=-u_0v_0\Delta_0^\prime(2,k-2,x)\widetilde{\Phi}(\pi_k(x)) \\ \qquad\qquad
+u_0\pi_k^\prime(x)\left(\big(\prod_{j=1}^{k-1}a_0^{(j)}\big)\widetilde{D}(\pi_k(x))+
v_0\Delta_0(2,k-2,x)\widetilde{C}(\pi_k(x))\right)\,.
\end{array}$$
Now, by (\ref{poly-map-BnL}) and using the elementary relations
\begin{equation}\label{CH12345}
\begin{array}{l}
\widehat{T}_n^2(x)+(1-x^2)\widehat{U}_{n-1}^2(x)=4^{1-n}\,,\quad
\widehat{U}_n^2(x)-\widehat{U}_{n-1}(x)\widehat{U}_{n+1}(x)=4^{-n}\,,\\ [0.2em]
x\widehat{U}_n(x)-(1-x^2)\widehat{U}_{n}^\prime(x)=(n+1)\widehat{T}_{n+1}(x)\,, \quad
\widehat{T}_n^\prime(x)=n\widehat{U}_{n-1}(x)\;, \\ [0.2em]
\widehat{T}_n(x)+x\widehat{U}_{n-1}(x)=2\widehat{U}_{n}(x)\,,
\end{array}
\end{equation}
after straightforward computations we deduce
$$
\begin{array}{l}
\Phi_1(x)=(1-x^2)\widehat{U}_{k-1}^3(x)\,,\quad
C_1(x)=-\widehat{U}_{k-1}^2(x)\big(x\widehat{U}_{k-1}(x)+2k\lambda\widehat{T}_k(x)\big)\,,\\ [0.3em]
D_1(x)=-2u_0\widehat{U}_{k-1}^2(x)\big(\widehat{U}_{k-1}(x)+k\lambda\widehat{T}_{k-1}(x)\big)\,.\\
\end{array}
$$
Therefore, canceling the common factor $\widehat{U}_{k-2}^2(x)$, we find that $S_\textbf{u}$ satisfies
(\ref{Stieltjes-monic-Bnlambda}), where $\Phi$, $C$, and $D$ given as in (\ref{pol-de-u}) and (\ref{PhiC}).
Since $\widehat{U}_{k-1}(\pm1)=k(\pm1)^{k-1}$, $\widehat{T}_k(\pm1)=(\pm1)^k$,
and taking into account that $\widehat{U}_{k-1}$ does not share zeros
with $\widehat{T}_k$,
we see that if $\lambda\ne0$ then the polynomials $\Phi$, $C$, and $D$ are co-prime,
hence the class of $\textbf{u}$ is equal to $s=\max\{\deg C-1, \deg D\}=k-1$.
It is clear that ${\bf u}$ satisfies (\ref{Eq-de-u}), taking into account that $\Psi(x)=C(x)+\Phi'(x)$.
If $\lambda=0$, then $\widehat{U}_{k-1}(x)$ is a common factor of the polynomials
$\Phi$, $C$, and $D$ in (\ref{PhiC}), hence canceling this factor we see that
${\bf u}$ is a classical functional, and so we see that $\{p_n\}_{n\geq0}$ is (up to normalization)
the Chebychev OPS of the second kind.
\end{proof}

\begin{remark}
Some authors define semiclassical functional requiring the pair $(\Phi,\Psi)$
appearing in the corresponding Pearson's equation to be an admissible pair, meaning that,
whenever $\deg\Phi=1+\deg\Psi$ the leading coefficient of $\Psi$ cannot
be a negative integer multiple of the leading coefficient of $\Phi$.
Medem \cite{JCMedemRoesicke} gave an example of a semiclassical functional and a corresponding pair $(\Phi,\Psi)$ which is not admissible. The above Theorem \ref{T3} shows that such a situation is not an isolated phenomenon. Indeed, choose $n_0\in\mathbb{N}$ such that $n_0+2$ is different from an integer multiple of $k$, and define
$$
\lambda:=-\frac{n_0+2+k}{2k}\;.
$$
Then, the functional ${\bf u}$ fulfilling (\ref{Eq-de-u}) is semiclassical (and so ${\bf u}$ is regular), although the corresponding pair $(\Phi,\Psi)$ given by (\ref{pol-de-u}) is not admissible. We recall, however, that for a classical functional the admissibility condition holds necessarily, a fact known as early as the work of Geronimus \cite{YaLGeronimus}.
\end{remark}

\subsection{Structure relation and second order linear ODE}

In this section we will give explicitly the structure relation
and the second order linear ODE fulfilled by the monic sieved OPS of the second kind,
given by (\ref{Sieved1}), so that
$$p_n(x)=\nu_nB_{n}^{\lambda}(x;k)\;,\quad
\nu_n:=\lfloor n/k\rfloor !/\big\{2^{n}(\lambda+1)_{\lfloor n/k\rfloor}\}\,,
$$
for each $n\in\mathbb{N}_0$, recovering in an alternative way ---in the framework of the theory of semiclassical OP---
the results given in \cite{BustozIsmailWimp}.
In what follows next we determine explicitly $M_n$ and $N_n$ for the sieved OP.

\begin{theorem}\label{Bn-MnNn}
The monic sieved OPS of the second kind 
$ p_n(x)=\nu_nB_{n}^{\lambda}(x;k)$
satisfies the structure relation $(\ref{RE})$, where
\begin{equation}\label{relStruc}
\begin{array}{rcl}
\Phi(x) &=& (1-x^2)\widehat{U}_{k-1}(x)\;, \\ [0.4em]
M_{nk+j}(x) &=& -2(nk+j+1+\lambda k)\widehat{U}_{k-1}(x) \\ [0.25em]
&&\quad -\frac{\lambda k}{2}\big(\widehat{U}_{j-1}(x)\widehat{U}_{k-j-2}(x)
-\widehat{U}_{j}(x)\widehat{U}_{k-j-3}(x)\big)\;, \\ [0.4em]
N_{nk+j}(x) &=& (nk+j+2+2\lambda k)x\widehat{U}_{k-1}(x)
-\lambda k\epsilon_j\widehat{U}_{k-2}(x) \\ [0.25em]
&&\quad +\frac{\lambda k}{8} \big(\widehat{U}_{j-1}(x)\widehat{U}_{k-j-3}(x)
-\widehat{U}_{j}(x)\widehat{U}_{k-j-4}(x)\big)
\end{array}
\end{equation}
for every $n=0,1,2,\ldots$ and $0\leq j\leq k-1$,
being $\epsilon_{k-1}:=1$, $\epsilon_{k-2}:=0$, and
$\epsilon_j:=\frac{1}{2}$ for $0\leq j\leq k-3$.
\end{theorem}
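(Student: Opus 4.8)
The plan is to lean entirely on the semiclassical structure already established: by Theorem \ref{T3} the functional ${\bf u}$ is semiclassical with Pearson data $(\Phi,\Psi)$ as in (\ref{pol-de-u}), so the structure relation (\ref{RE}) holds and, crucially, its coefficients $M_n$, $N_n$ are \emph{uniquely} determined by the two recurrences (\ref{RRMN}) together with the initial data $M_{-1}=0$, $N_{-1}=-C$, $M_0=u_0^{-1}D$. Hence no independent guess is needed: it suffices to verify that the closed forms (\ref{relStruc}) satisfy these initial conditions and both relations in (\ref{RRMN}). Two preliminary observations cut down the bookkeeping. First, the sieved OPS of the second kind is symmetric (the weight $w$ of Section 4.1 is even), so every $\beta_n$ vanishes and (\ref{RRMN}) collapses to $N_n+N_{n-1}+xM_n+C=0$ and $\gamma_{n+1}M_{n+1}-\gamma_nM_{n-1}-x(N_{n-1}-N_n)+\Phi=0$. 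Second, the recurrence coefficients are read off directly from the block data of Section 4.1 as $\gamma_{nk+j}=a_n^{(j)}$, equal to $\tfrac14$ for $1\leq j\leq k-2$ and to the two exceptional $\lambda$-dependent values $a_{n+1}^{(0)}$, $a_n^{(k-1)}$ at the block edges.

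First I would pin down $C=\Psi-\Phi'$. Using $(1-x^2)\widehat{U}_{k-1}'=x\widehat{U}_{k-1}-k\widehat{T}_k$ from (\ref{CH12345}) one gets $\Phi'=-x\widehat{U}_{k-1}-k\widehat{T}_k$, whence $C(x)=-\big(x\widehat{U}_{k-1}(x)+2k\lambda\widehat{T}_k(x)\big)$, matching (\ref{PhiC}). The initial conditions are then quick checks: $N_{-1}=-C$ is immediate, and $M_0=u_0^{-1}D=-2\big(\widehat{U}_{k-1}+k\lambda\widehat{T}_{k-1}\big)$ by (\ref{PhiC}), which must be reconciled with (\ref{relStruc}) at $n=j=0$, namely $-2(1+\lambda k)\widehat{U}_{k-1}+\tfrac{\lambda k}{2}\widehat{U}_{k-3}$; the two agree by the elementary identity $\widehat{T}_{k-1}=\widehat{U}_{k-1}-\tfrac14\widehat{U}_{k-3}$, the same one underlying $\pi_k=\widehat{T}_k$ in (\ref{poly-map-BnL}). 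This matching is reassuring evidence that the normalizations in (\ref{relStruc}) are correct.

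The core of the argument is to substitute (\ref{relStruc}) into the two simplified relations and verify them as polynomial identities for every index $N=nk+j$. Both reductions are driven by the Chebyshev identities (\ref{CH12345}): the three-term relation $\widehat{T}_n+x\widehat{U}_{n-1}=2\widehat{U}_n$ and $\widehat{U}_n^2-\widehat{U}_{n-1}\widehat{U}_{n+1}=4^{-n}$ let one collapse the product differences $\widehat{U}_{j-1}\widehat{U}_{k-j-2}-\widehat{U}_j\widehat{U}_{k-j-3}$ and $\widehat{U}_{j-1}\widehat{U}_{k-j-3}-\widehat{U}_j\widehat{U}_{k-j-4}$ into single terms proportional to $\widehat{U}_{k-2j-3}$ and $\widehat{U}_{k-2j-4}$, while $\widehat{T}_n'=n\widehat{U}_{n-1}$ and $x\widehat{U}_n-(1-x^2)\widehat{U}_n'=(n+1)\widehat{T}_{n+1}$ generate exactly the terms carrying the linear-in-$n$ factor $nk+j+1+\lambda k$. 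On the generic interior range $1\leq j\leq k-2$, where $\gamma=\tfrac14$ throughout and the shift $j\mapsto j+1$ stays inside one block, the two identities reduce to routine cancellations once (\ref{CH12345}) is applied.

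The hard part will be the boundary bookkeeping. At $j=0$ and $j=k-1$ the coefficient $\gamma_{nk+j}$ takes its exceptional values $a_n^{(0)}$, $a_n^{(k-1)}$, and the second relation in (\ref{RRMN}) couples the top of one block, $N_{nk+k-1}$, to the bottom of the next, $M_{(n+1)k}$; moreover several Chebyshev factors in (\ref{relStruc}) reach low or negative indices (for instance $\widehat{U}_{-1}=0$, and $\widehat{U}_{k-j-3}$, $\widehat{U}_{k-j-4}$ for $j\in\{k-2,k-1\}$), which must be interpreted through the standard continuation of $\widehat{U}_n$ to negative $n$. The correction constants $\epsilon_{k-1}=1$, $\epsilon_{k-2}=0$, $\epsilon_j=\tfrac12$ are designed precisely to absorb these edge effects and force the relations to close; checking that they do so in each of the cases $j\in\{0,k-2,k-1\}$, together with the interaction between the $\lambda$-dependent $\gamma$-values at the edges and the $\lambda$-dependent terms in $M$ and $N$, is where the genuine work lies. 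Once the edge cases are settled, uniqueness of the solution of (\ref{RRMN}) identifies the verified closed forms with the true $M_n$, $N_n$, completing the proof.
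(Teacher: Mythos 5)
Your overall strategy is legitimate and genuinely different from the paper's. The paper proceeds constructively: it differentiates the polynomial-mapping representation $p_{kn+j}(x)=\widehat{U}_j(x)q_n(\widehat{T}_k(x))+4^{-j}a_n^{(0)}\widehat{U}_{k-j-2}(x)q_{n-1}(\widehat{T}_k(x))$, pushes the classical (Gegenbauer) structure relation for $q_n$ through the map $\widehat{T}_k$, and after eliminating $q_n$, $q_{n-1}$ and their derivatives obtains $M_{nk+j}$ and $N_{nk+j}$ directly for $0\le j\le k-4$ and for $j=k-1$; only the residues $j=k-2$ and $j=k-3$ are then filled in from (\ref{RRMN}) together with (\ref{PhiC}). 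You instead take the closed forms (\ref{relStruc}) as given and verify them against Maroni's recursive system (\ref{RRMN}) with initial data $M_{-1}=0$, $N_{-1}=-C$, $M_0=u_0^{-1}D$, invoking uniqueness of its solution; this is sound, and your preliminary steps are correct: $\beta_n\equiv 0$, $\gamma_{nk+j}=a_n^{(j)}$, the computation $C=\Psi-\Phi'=-\big(x\widehat{U}_{k-1}+2k\lambda\widehat{T}_k\big)$, and the reconciliation of $M_0$ with (\ref{relStruc}) via $\widehat{T}_{k-1}=\widehat{U}_{k-1}-\frac14\widehat{U}_{k-3}$. What the paper's route buys is that it \emph{derives} the formulas (one could discover them this way); what yours buys is a single uniform verification mechanism in place of the mapping manipulations, at the price of needing the answer in advance --- which the statement supplies. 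Both routes rest on Theorem \ref{T3} for $(\Phi,\Psi,C,D)$, and both end up using (\ref{RRMN}): the paper for two residues, you for all of them.

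Two cautions about where your plan still has to earn its keep. First, essentially all the content of the theorem --- the values $\epsilon_{k-1}=1$, $\epsilon_{k-2}=0$, and the interaction of the $\lambda$-dependent $\gamma$'s at the block edges with the $\lambda$-dependent terms of $M$ and $N$ --- lives precisely in the edge cases you defer, so as written this is a plan rather than a proof. Second, and concretely: your proposed reading of the low-index terms via ``the standard continuation of $\widehat{U}_n$ to negative $n$'' (i.e.\ $U_{-1}=0$, $U_{-2}=-1$, \ldots) would make the verification \emph{fail}. For instance, at $j=k-2$ that convention contributes $-\frac{\lambda k}{8}\widehat{U}_{k-2}(x)\widehat{U}_{-2}(x)=+\frac{\lambda k}{2}\widehat{U}_{k-2}(x)$ to $N_{nk+k-2}$, whereas the true coefficient is $N_{nk+k-2}(x)=k(n+1+2\lambda)\,x\widehat{U}_{k-1}(x)$ (equation (\ref{Nk-2}) of the paper's proof), with no $\widehat{U}_{k-2}$ term at all. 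The closed forms (\ref{relStruc}) close under (\ref{RRMN}) only when every $\widehat{U}_m$ with $m<0$ is read as $0$; with that convention your edge-case verification will indeed succeed, since the formulas then agree with the explicit edge values (\ref{MNk-1}), (\ref{Nk-2}), (\ref{Nk-3}), and (\ref{Mk-2}) computed in the paper.
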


\begin{proof}
Making $m=k-1$ in (\ref{pnblockmp2}) and taking into account (\ref{poly-map-BnL}), we obtain
\begin{equation}\label{TPSieved}
p_{kn+j}(x)= \widehat{U}_j(x)q_{n}(\widehat{T}_k(x))+4^{-j}a_n^{(0)} \widehat{U}_{k-j-2}(x)q_{n-1}(\widehat{T}_k(x))
\end{equation}
Taking derivatives in both sides of (\ref{TPSieved}), we obtain
\begin{equation}\label{DSieved1}
\begin{array}l
p_{kn+j}^\prime(x)= \displaystyle
\widehat{U}_j^\prime(x)q_n(\widehat{T}_k(x))+\mathcal{A}_j(x)q_n^\prime(\widehat{T}_k(x))\\
 \rule{0pt}{1.2em}\qquad\qquad\quad  +4^{-j}a_n^{(0)} \widehat{U}_{k-j-2}^\prime(x)q_{n-1}(\widehat{T}_k(x))
  +\mathcal{B}_j(x)q_{n-1}^\prime(\widehat{T}_k(x))\,,
\end{array}\end{equation}
where $\mathcal{A}_j$ and $\mathcal{B}_j$ are polynomials defined by
$$\mathcal{A}_j(x):=\widehat{U}_j(x)\widehat{T}_k^\prime(x) \; ,\quad \mathcal{B}_j(x):=4^{-j}a_n^{(0)} \widehat{U}_{k-j-2}(x)\widehat{T}_k^\prime(x)\,.$$
Multiplying both sides of (\ref{DSieved1}) by $\widehat{U}_{k-j-2}(x)$ and using (\ref{TPSieved}),
we deduce
\begin{equation}\label{Rel1}
\begin{array}{l}
\widehat{U}_{k-j-2}(x)\left(\mathcal{A}_j(x)q_n^\prime(\widehat{T}_k(x))
+\mathcal{B}_j(x)q_{n-1}^\prime(\widehat{T}_k(x))\right) \\
\rule{0pt}{1.2em}\qquad\quad =\widehat{U}_{k-j-2}(x)p_{nk+j}^\prime(x)
-\widehat{U}_{k-j-2}^\prime(x)p_{nk+j}(x)\\
\rule{0pt}{1.2em}\qquad\qquad +\left(\widehat{U}_{k-j-2}^\prime(x)U_{j}(x)
-\widehat{U}_{k-j-2}(x)U_{j}^\prime(x)\right)q_n(\widehat{T}_k(x))\;.
\end{array}
\end{equation}
Now, since $\{q_n\}_{n\geq0}$ is a classical OPS, it fulfills the structure relation (see e.g. \cite{Maroni})
\begin{equation}\label{RE1}
\tilde{\Phi}(x)q_{n}^\prime(x)=\tilde{M}_n(x)q_{n+1}(x)+\tilde{N}_n(x)q_{n}(x)\; ,
\end{equation}
being
$\tilde{\Phi}(x)=4^{1-k}-x^2$, $\tilde{N}_n(x)=(n+2\lambda+2)x$, and $\tilde{M}_n(x)=-2(\lambda+n+1)$.
Replacing $x$ by $\widehat{T}_k(x)$ in (\ref{RE1}), and then multiplying both sides of the resulting equation by
$\mathcal{A}_j(x)\widehat{U}_{k-1}(x)\widehat{U}_{k-j-2}(x)$, one obtains a first equation.
Similarly, substituting $x$ by $\widehat{T}_k(x)$ in (\ref{RE1}), and then changing $n$ into $n-1$ and multiplying
both sides of the resulting equation by $\mathcal{B}_j(x)\widehat{U}_{k-1}(x)\widehat{U}_{k-j-2}(x)$, we obtain a second equation. Adding these two equations and using (\ref{TPSieved}) and (\ref{Rel1}), we deduce
\begin{equation}\label{relacaoP1}
\mathscr{L}_1(x)p_{nk+j}^\prime(x)=\mathscr{L}_2(x)p_{nk+j}(x)+\mathscr{L}_3(x)p_{nk+k-1}(x)+\mathscr{L}_4(x)p_{(n+1)k+k-1}(x)\,,
\end{equation}
where  $\mathscr{L}_1$, $\mathscr{L}_2$, $\mathscr{L}_3$, and $\mathscr{L}_4$ are polynomials defined by
\begin{equation}\label{Li}
\hspace*{-0.1cm}\begin{array}l
\mathscr{L}_1(x):=\widehat{U}_{k-j-2}(x)\widehat{U}_{k-1}(x)\tilde{\Phi}\big(\widehat{T}_k(x)\big)\;,\\ [0.5em]
\mathscr{L}_2(x):=\widehat{U}_{k-j-2}^\prime(x)\widehat{U}_{k-1}(x)\tilde{\Phi}\big(\widehat{T}_k(x)\big)+
\widehat{U}_{k-j-2}(x)\widehat{U}_{k-1}(x)\widehat{T}_k^\prime(x)\tilde{N}_{n-1}\big(\widehat{T}_k(x)\big)\;,\\  [0.5em]
\mathscr{L}_3(x):=\left(\widehat{U}_{k-j-2}(x)\widehat{U}_{j}^\prime(x)
-\widehat{U}_{k-j-2}^\prime(x)\widehat{U}_{j}(x)\right)\tilde{\Phi}\big(\widehat{T}_k(x)\big)\\
\rule{0pt}{1.2em}\qquad\qquad\qquad + \widehat{U}_{k-j-2}(x)\left(\mathcal{A}_j(x)\tilde{N}_{n}\big(\widehat{T}_k(x)\big)+\mathcal{B}_j(x)\tilde{M}_{n-1}
\big(\widehat{T}_k(x)\big)\right) \\
\rule{0pt}{1.2em}\qquad\qquad\qquad -
\widehat{U}_{k-j-2}(x)\widehat{U}_{j}(x)\widehat{T}_k^\prime(x)\tilde{N}_{n-1}\big(\widehat{T}_k(x)\big)\,,\\  [0.5em]
\mathscr{L}_4(x):=\mathcal{A}_j(x)\widehat{U}_{k-j-2}(x)\tilde{M}_{n}\big(\widehat{T}_k(x)\big)\;.
\end{array}
\end{equation}
Taking into account the three-term recurrence relation for $\{p_n\}_{n\geq0}$, we deduce
\begin{align}\label{relacoesP}
\nonumber p_{(n+1)k+k-1}(x)=&\big(x\widehat{U}_{k-1}(x)-a_{n+1}^{(0)}\widehat{U}_{k-2}(x)\big)p_{nk+k-1}(x) \\
 &-a_n^{(k-1)}\widehat{U}_{k-1}(x)p_{nk+k-2}(x)\; ,\\
\nonumber p_{nk+k-i}(x)=&\widehat{U}_{k-j-i-1}(x)p_{nk+j+1}(x)-\frac{1}{4}\widehat{U}_{k-j-i-2}(x)p_{nk+j}(x)
\end{align}
for every $n\in\mathbb{N}_0$ and $0\leq j\leq k-i-2$, $i=1, 2$.
Substituting (\ref{relacoesP}) in (\ref{relacaoP1}), we obtain
$$
\mathscr{L}_1(x)p_{nk+j}^\prime(x)=N_{nk+j}(x)p_{nk+j}(x)+M_{nk+j}(x)p_{nk+j+1}(x)
$$
for every $n=0,1,2, \ldots$ and $0\leq j\leq k-4$, where
\begin{equation}\label{d}
\begin{array}{l}
M_{nk+j}(x):=\mathcal{H}_1(x)\widehat{U}_{k-j-2}(x)-\mathcal{H}_2(x)\widehat{U}_{k-j-3}(x)\\
N_{nk+j}(x):=\mathscr{L}_2(x)-\frac{1}{4}\mathcal{H}_1(x)\widehat{U}_{k-j-3}(x)
+\frac{1}{4}\mathcal{H}_2(x)\widehat{U}_{k-j-4}(x)\,,
\end{array}
\end{equation}
being
$$
\begin{array}{l}
\mathcal{H}_1(x):=\mathscr{L}_3(x)
+\mathscr{L}_4(x)\big(x\widehat{U}_{k-1}(x)
-a_{n+1}^{(0)}\widehat{U}_{k-2}(x)\big)\,, \\
\mathcal{H}_2(x):=\mathscr{L}_4(x)a_n^{(k-1)}\widehat{U}_{k-1}(x)\,.
\end{array}
$$
Using some basic properties of Chebyshev polynomials we may verify that,
up to the factor $\frac{1}{k}\widehat{T}_k^\prime(x)\widehat{U}_{k-1}(x)\widehat{U}_{k-j-2}(x)$,
the relations
\begin{equation}\label{LMNj}\begin{array}{rcl}
\mathscr{L}_1(x)&=&(1-x^2)\widehat{U}_{k-1}(x)\\ [0.5em]
M_{nk+j}(x)&=&-2(nk+j+1+\lambda k)\widehat{U}_{k-1}(x) \\
&&\quad-\frac{\lambda k}{2}\left(\widehat{U}_{j-1}(x)\widehat{U}_{k-j-2}(x)
-\widehat{U}_{j}(x)\widehat{U}_{k-j-3}(x)\right)\\ [0.5em]
N_{nk+j}(x)&=&(nk+j+2+2\lambda k)x\widehat{U}_{k-1}(x)-\frac{\lambda k}{2}\widehat{U}_{k-2}(x)\\
&&\quad +\frac{\lambda k}{8}\left(\widehat{U}_{j-1}(x)\widehat{U}_{k-j-3}(x)
-\widehat{U}_{j}(x)\widehat{U}_{k-j-4}(x)\right)
\end{array}
\end{equation}
hold for every $n\in\mathbb{N}_0$ and $0\leq j\leq k-4$.
Moreover, when $j=k-1$, using the relation
$p_{nk+k-1}(x)=\widehat{U}_{k-1}(x)q_{n}(\widehat{T}_k(x))$ we may write
\begin{equation}\label{Pk-1}
\begin{array}{l}
p_{nk+k-1}^\prime(x)=\widehat{U}_{k-1}^\prime(x)q_{n}(\widehat{T}_k(x))
+\widehat{U}_{k-1}(x)\widehat{T}_k^\prime(x)q_{n}^\prime(\widehat{T}_k(x))\,.
\end{array}
\end{equation}
Multiplying both sides of (\ref{RE1}) by $\widehat{U}_{k-1}^2(x)\widehat{T}_k^\prime(x)$
and taking into account (\ref{Pk-1}) and (\ref{relacoesP}), we obtain, up to the factor
$\frac{1}{k}\widehat{T}_k^\prime(x)\widehat{U}_{k-1}(x)$,
$$
\mathscr{L}_1(x)p_{kn+k-1}^\prime(x)=N_{nk+k-1}(x)p_{nk+k-1}(x)+M_{nk+k-1}(x)p_{nk+k}(x)\;,
$$
where
\begin{equation}\label{MNk-1}
\begin{array}{l}
M_{nk+k-1}(x):=-2k(\lambda+n+1)\widehat{U}_{k-1}(x)\;,\\
N_{nk+k-1}(x):=(nk+k+2\lambda k+1)x\widehat{U}_{k-1}(x)-\lambda k\widehat{U}_{k-2}(x)\,.
\end{array}
\end{equation}
Taking into account (\ref{RRMN}), (\ref{PhiC}), and (\ref{MNk-1}), and using again some basic properties of the
Chebyshev polynomials, we deduce
\begin{align}\label{Nk-2}
\nonumber N_{nk+k-2}(x) =& -N_{nk+k-1}(x)-xM_{nk+k-1}(x)-C(x) \\
=& k(n+1+2\lambda)x\widehat{U}_{k-1}(x)\,.
\end{align}
Combining relations (\ref{RRMN}) and taking into account (\ref{PhiC}), we deduce
\begin{align*}
&\left(x^2-\mbox{$\frac14$}\right)N_{nk+k-3}(x)\\
&\,=x\big(-\Phi(x)+\mbox{$\frac14$}M_{nk+k-4}(x)+xN_{nk+k-4}(x)\big)+\mbox{$\frac14$}\left(C(x)+N_{nk+k-2}(x)\right)\\
&\,=\big(x^2-\mbox{$\frac14$}\big)\Big((k-1-nk)x\widehat{U}_{k-1}(x)-\mbox{$\frac{\lambda k}{2}$}\big(2\widehat{U}_{k-2}(x)
-4x\widehat{U}_{k-1}(x)-x\widehat{U}_{k-3}(x)\big)\Big)\,,
\end{align*}
so that
\begin{equation}\label{Nk-3}
N_{nk+k-3}(x)=\big(nk+k-1+2\lambda k\big)x\widehat{U}_{k-1}(x)-\mbox{$\frac{\lambda k}{2}$}
\widehat{U}_{k-2}(x)+\mbox{$\frac{\lambda k}{8}$}\widehat{U}_{k-4}(x)\,.
\end{equation}
Finally, using (\ref{RRMN}), (\ref{PhiC}), (\ref{Nk-2}), and (\ref{Nk-3}), we obtain
$$
\begin{array}{ll}
xM_{nk+k-2}(x)&=-N_{nk+k-3}(x)-N_{nk+k-2}(x)-C(x)\\
&=-2(nk+k-1+\lambda k)x\widehat{U}_{k-1}(x)-\frac{\lambda k}{2}x\widehat{U}_{k-3}(x)\\
xM_{nk+k-3}(x)&=-N_{nk+k-4}(x)-N_{nk+k-3}(x)-C(x)\\
&=-2(nk+k-2+\lambda k)x\widehat{U}_{k-1}(x)-\frac{\lambda k}{8}x\widehat{U}_{k-5}(x)\,,
\end{array}
$$
hence
\begin{equation}\label{Mk-2}
\begin{array}l
M_{nk+k-2}(x)=-2(nk+k-1+\lambda k)\widehat{U}_{k-1}(x)-\frac{\lambda k}{2}\widehat{U}_{k-3}(x)\\
M_{nk+k-3}(x)=-2(nk+k-2+\lambda k)\widehat{U}_{k-1}(x)-\frac{\lambda k}{8}\widehat{U}_{k-5}(x)\,.
\end{array}
\end{equation}
Thus the proof is complete.
\end{proof}

\begin{remark}
We can give alternative expressions for the polynomials $M_n$ and $N_n$
appearing in (\ref{relStruc}). Indeed, since
\begin{equation}\label{UnUmx}
\widehat{U}_{n}(x)\widehat{U}_{m}(x)-\widehat{U}_{n-1}(x)\widehat{U}_{m+1}(x)=
\left\{\begin{array}{lll}
4^{-n}\widehat{U}_{m-n}(x) & \mbox{\rm if} & 0\leq n\leq m\,;\\ [0.25em]
-4^{-m-1}\widehat{U}_{n-m-2}(x) & \mbox{\rm if} & 0\leq m< n\;,\\
\end{array}\right.
\end{equation}
we may write
$$
\begin{array}l M_{nk+j}(x)=-2(nk+j+1+\lambda
k\delta_j)\widehat{U}_{k-1}(x)-\frac{\lambda k}{2} U_{k,j}(x)\;, \\ [0.5em]
N_{nk+j}(x)=(nk+j+2+2\lambda k\delta_j)x\widehat{U}_{k-1}(x)-\frac{\lambda
k}{2}\widehat{U}_{k-2}(x)+2\lambda k V_{k,j}(x)\,, \\
\end{array}
$$
where $\delta_j:=1$ if $0\leq j\leq k-2$, $\delta_{k-1}:=0$, and
$U_{k,j}$ and $V_{k,j}$ are polynomials defined by
$$
U_{k,j}(x):=\left\{\begin{array}{lll}
-4^{-j}\widehat{U}_{k-3-2j}(x) & \mbox{\rm if} & j=0,1,\ldots,\lfloor\frac{k-3}{2}\rfloor\\ [0.25em]
4^{-k+j+2}\widehat{U}_{2j-k+1}(x) & \mbox{\rm if} & j=1+\lfloor\frac{k-3}{2}\rfloor,\ldots,
k-1\;,\\\end{array}\right.
$$
$$
V_{k,j}(x):=\left\{\begin{array}{lll}
-4^{-j-2}\widehat{U}_{k-4-2j}(x) & \mbox{\rm if} & j=0,1,\ldots,\lfloor\frac{k-4}{2}\rfloor\\ [0.25em]
4^{-k+j+1}\widehat{U}_{2j-k+2}(x) & \mbox{\rm if} & j=1+\lfloor\frac{k-4}{2}\rfloor,\ldots,
k-1\,.\\\end{array}\right.
$$
\end{remark}

\begin{remark}
Theorem \ref{Bn-MnNn} allows us to recover Theorem 3.1 in \cite{BustozIsmailWimp}.
Indeed, taking into account the three-term recurrence relation for $\{p_n\}_{n\geq0}$,
as well as (\ref{TUM}) and the first identity in (\ref{CH12345}), setting $y_n(x):=B_n^\lambda(x;k)$, we obtain
$$\left(1-T_k^2(x)\right)y_n^\prime(x)=g_{n}(x)y_{n-1}(x)+h_{n}(x)y_{n}(x)\;,$$
where $g_{n}$ and $h_{n}$ are polynomials defined by
$$
\begin{array}{l}
g_{nk+j}(x):=U_{k-1}(x)\left\{(nk+j+1+\lambda k)U_{k-1}(x)
+\lambda k\mathscr{U}_{k,j}(x)\right\}\;, \\ [0.25em]
h_{nk+j}(x):=-U_{k-1}(x)\left\{(nk+j)xU_{k-1}(x)
+\lambda kU_{k-2}(x)+\lambda k \mathscr{W}_{k,j}(x)\right\}
\end{array}
$$
for every $n\in\mathbb{N}_0$ and $0\leq j\leq k-1$,
being $\mathscr{U}_{k,j}$ and $\mathscr{W}_{k,j}$ polynomials defined by
$$\mathscr{U}_{k,j}(x):=\left\{\begin{array}{lll}
-U_{k-2j-3}(x) & \mbox{\rm if} & j=0,1,\ldots,\lfloor\frac{k-3}{2}\rfloor\\ [0.25em]
U_{2j-k+1}(x) & \mbox{\rm if} & j=1+\lfloor\frac{k-3}{2}\rfloor, \ldots,
k-1\,,\\\end{array}\right.$$
$$\mathscr{W}_{k,j}(x):=\left\{\begin{array}{lll}
-U_{k-2j-2}(x) & \mbox{\rm if} & j=0,1,\ldots,\lfloor\frac{k-2}{2}\rfloor\\ [0.25em]
U_{2j-k}(x) & \mbox{\rm if} & j=1+\lfloor\frac{k-2}{2}\rfloor, \ldots,
k-1\,.\\\end{array}\right.$$
\end{remark}
The second order linear ODE fulfilled by the sieved OPS of the second kind follows now easily.

\begin{theorem}\label{Bn-JnKnLn}
The monic sieved OPS of the second kind $p_n(x)=\nu_nB_{n}^{\lambda}(x;k)$
satisfies the second order ODE $(\ref{ED2O})$, where
\begin{equation}\label{JKL}\begin{array}{rcl}
J_{nk+j}(x)&=& \Phi(x)M_{nk+j}(x)\;,\\ [0.25em]
K_{nk+j}(x)&=& \Psi(x)M_{nk+j}(x)-\Phi(x) M_{nk+j}^\prime(x)\;,\\ [0.25em]
L_{nk+j}(x)&=& N_{nk+j}(x)M_{nk+j}^\prime(x)+\big(\Omega_j(x)-N_{nk+j}^\prime(x)\big)M_{nk+j}(x)
\end{array}\end{equation}
for all $n\geq1$ and $0\leq j\leq k-1$, being $M_{nk+j}$ and $N_{nk+j}$
given by $(\ref{relStruc})$, and
$$
\begin{array}{l}
\Phi(x):=(1-x^2)\widehat{U}_{k-1}(x)\, ,\quad
\Psi(x):=-\big(2x\widehat{U}_{k-1}(x)+k(2\lambda+1) \widehat{T}_{k}(x)\big)\;, \\ [0.25em]
\Omega_j(x) = (nk+j+1)(nk+j+2+2\lambda k)\widehat{U}_{k-1}(x)
-\frac{\lambda k}{2}\widehat{U}_{j}(x)\widehat{U}_{k-j-3}(x)\,.
\end{array}
$$
\end{theorem}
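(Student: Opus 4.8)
The plan is to read the differential equation off directly from the general semiclassical machinery recalled in Section~2. By Theorem~\ref{T3} the functional $\mathbf{u}$ obeys the Pearson equation~(\ref{Eq-de-u}) with $\Phi,\Psi$ as in~(\ref{pol-de-u}), and by Theorem~\ref{Bn-MnNn} the monic family $p_n=\nu_n B_n^\lambda(\cdot;k)$ fulfils the structure relation~(\ref{RE}) with $M_{nk+j},N_{nk+j}$ given by~(\ref{relStruc}). Consequently the characterization~(\ref{ED2O})--(\ref{PED2O}) applies verbatim, so $p_n$ solves~(\ref{ED2O}) with $J_n,K_n,L_n$ as in~(\ref{PED2O}). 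The first two lines of~(\ref{PED2O}) give $J_{nk+j}=\Phi M_{nk+j}$ and $K_{nk+j}=\Psi M_{nk+j}-\Phi M_{nk+j}'$ upon specializing the index, which are exactly the first two lines of~(\ref{JKL}); nothing further is needed for them.

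The whole substance of the proof is therefore to reconcile the third line of~(\ref{PED2O}) with the claimed $L_{nk+j}$. Since $W(N_{nk+j},M_{nk+j})=N_{nk+j}M_{nk+j}'-N_{nk+j}'M_{nk+j}$, the asserted expression $L_{nk+j}=N_{nk+j}M_{nk+j}'+(\Omega_j-N_{nk+j}')M_{nk+j}$ and the formula in~(\ref{PED2O}) share the term $N_{nk+j}M_{nk+j}'-N_{nk+j}'M_{nk+j}$; cancelling it and then the common nonzero factor $M_{nk+j}$ reduces the claim to the single polynomial identity
\[
\Phi(x)\,\Omega_j(x)=\gamma_{nk+j+1}\,M_{nk+j}(x)\,M_{nk+j+1}(x)-N_{nk+j}(x)\big(N_{nk+j}(x)+C(x)\big).
\]
Here $C=\Psi-\Phi'$, which by~(\ref{pol-de-u}) and~(\ref{CH12345}) simplifies to $C(x)=-\big(x\widehat{U}_{k-1}(x)+2k\lambda\widehat{T}_k(x)\big)$ (in agreement with the polynomial $C$ of~(\ref{PhiC})), and $\gamma_{nk+j+1}$ is the recurrence coefficient of $\{p_n\}$, equal to $a_n^{(j+1)}$ inside a block and to $a_{n+1}^{(0)}$ when $j=k-1$.

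To prove this identity I would insert the explicit expressions~(\ref{relStruc}) for $M_{nk+j}$, $M_{nk+j+1}$ and $N_{nk+j}$, together with the recurrence coefficients $a_n^{(j)}$ computed earlier in this section, and systematically collapse the resulting products of Chebyshev polynomials using the elementary relations~(\ref{CH12345}) and the product formula~(\ref{UnUmx}). The main obstacle is precisely this reduction: one must confirm that the right-hand side---a priori merely a polynomial---is in fact divisible by $\Phi=(1-x^2)\widehat{U}_{k-1}$ and that the quotient contracts to the two-term form $\Omega_j(x)=(nk+j+1)(nk+j+2+2\lambda k)\widehat{U}_{k-1}(x)-\tfrac{\lambda k}{2}\widehat{U}_j(x)\widehat{U}_{k-j-3}(x)$. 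The bookkeeping is delicate because of the $j$-dependent mixed products and the small-index conventions such as $\widehat{U}_{-1}=0$, so I anticipate verifying the generic range $0\le j\le k-4$ by one uniform computation and treating the boundary values $j=k-3,k-2,k-1$ separately, just as the corresponding cases of $M_{nk+j},N_{nk+j}$ were handled in the proof of Theorem~\ref{Bn-MnNn}. Once the identity holds for all admissible $n$ and $j$, the three lines of~(\ref{JKL}) are established and the proof is complete.
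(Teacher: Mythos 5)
Your proposal is correct and follows essentially the same route as the paper: both deduce the first two lines of (\ref{JKL}) directly from (\ref{PED2O}) and reduce the third line, after cancelling the Wronskian term and the factor $M_{nk+j}$, to the single polynomial identity $\Phi(x)\,\Omega_j(x)=a_n^{(j+1)}M_{nk+j}(x)M_{nk+j+1}(x)-N_{nk+j}(x)\big(N_{nk+j}(x)+C(x)\big)$, which is then checked by Chebyshev-polynomial manipulations such as $\widehat{U}_m^2(x)-\widehat{U}_{m+1}(x)\widehat{U}_{m-1}(x)=4^{-m}$. Your treatment is in fact slightly more explicit than the paper's (identifying $\gamma_{nk+j+1}$ with $a_n^{(j+1)}$ via the convention (\ref{convention1}), and flagging the boundary cases $j=k-3,k-2,k-1$), but the underlying argument is the same.
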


\begin{proof}
The first two equalities in (\ref{JKL}) follow immediately from (\ref{PED2O}).
To prove the third equality in (\ref{JKL}), we only need to take into account
the third equality in (\ref{PED2O}) and noticing that, using basic properties of the
Chebyshev polynomials, as well as the relations
$\widehat{U}_{m}^2(x)-\widehat{U}_{m+1}(x)\widehat{U}_{m-1}(x)=4^{-m}$ ($m=0,1,2,\ldots$),
the equality
$$\frac{a_n^{(j+1)}M_{nk+j}(x)M_{nk+j+1}(x)-N_{nk+j}(x)\big(N_{nk+j}(x)+C(x)\big)}{\Phi(x)}=\Omega_j(x)$$
holds for every $n\in\mathbb{N}_0$ and $0\leq j\leq k-1$.
\end{proof}

It is worth mentioning that a misprint appeared in the ODE given in \cite[Theorem 3.2]{BustozIsmailWimp}, as Professor Bustoz kindly commented to the third author of the present work during a visited to the Arizona State University at the 1990's.

\section{On sieved ultraspherical OP of the first kind}
\label{poly-semi-sieved}

\subsection{Description via a polynomial mapping}

Taking for $\{p_n\}_{n\geq0}$ the monic OPS corresponding
to $\{c_n^\lambda(\cdot;k)\big\}_{n\geq0}$, so that
\begin{equation}\label{Sieved1c}
p_{kn+j+1}(x)=\frac{(1+2\lambda)_n}{2^{kn+j}(\lambda+1)_n}c_{kn+j+1}^{\lambda}(x;k)
\end{equation}
$(n=0,1,2,\ldots\,;\;j=0,1,\ldots,k-1)$, and
using the three-term recurrence relation for $\{c_n^\lambda(\cdot;k)\}_{n\geq0}$
given in \cite{AlSalamAllawayAskey}, we see that the coefficients appearing in the (block)
three-term recurrence relation (\ref{pnblock1}) for $\{p_n\}_{n\geq0}$ are given by
$$
\begin{array}{c}
b_{n}^{(j)}:=0\quad (0\leq j\leq k-1)\; ,\quad
a_{n}^{(j)}:=\frac14\quad (2\leq j\leq k-1)\; , \\ [0.5em]
\displaystyle
a_{n}^{(0)}:=\frac{n}{4(n+\lambda)} \, ,\quad
a_n^{(1)}:=\frac{n+2\lambda}{4(n+\lambda)}
\end{array}
$$
for each $n\in\mathbb{N}_0$. Hence, for every $n\in\mathbb{N}_0$ and $0\leq j\leq k-1$, we compute
$$\Delta_n(2,j;x)=\widehat{U}_{j}(x)\; ,\quad
\Delta_n(j+3,k-1;x)=\widehat{U}_{k-j-2}(x)\; ,$$
and so one sees that the hypothesis of Theorem \ref{teobk1p2}
are fulfilled, with $m=0$ and being the polynomial mapping described by the polynomials
\begin{equation}\label{poly-map-cnLc}
\pi_k(x):=\widehat{U}_{k}(x)-\mbox{$\frac14$}\,\widehat{U}_{k-2}(x)=\widehat{T}_k(x)\, , \quad
\eta_{k-1}(x):=\widehat{U}_{k-1}(x) \,,\quad \theta_0(x)\equiv1 \, .
\end{equation}
Moreover, $\{q_n\}_{n\geq0}$ is the monic OPS characterized by
$$
r_0=r_n=0\, , \quad s_n=4^{2-k}\, a_n^{(0)}a_{n-1}^{(1)}
=\frac{1}{4^k}\frac{n(n-1+2\lambda)}{(n+\lambda)(n-1+\lambda)}
\quad (n\in\mathbb{N})\;,
$$
meaning that $q_n$ is up to an affine change of variables
the ultraspherical polynomial of degree $n$ with parameter $\lambda$:
\begin{equation}\label{qnCnlambdac}
q_n(x)=\frac{n!}{2^{kn}(\lambda)_n}\, C_n^{\lambda}\left(
2^{k-1}x\right)\;.
\end{equation}
For $\lambda>-1/2$, the
orthogonality measure for $\{c_n^\lambda(\cdot;k)\}_{n\geq0}$---given in \cite{AlSalamAllawayAskey}---may
be computed using Theorem \ref{teobk1p2measure},
being absolutely continuous with weight function
$$
w(x):=(1-x^2)^{\lambda-\frac12}\big|U_{k-1}(x)\big|^{2\lambda}\,,\quad -1<x<1\;.
$$

\subsection{Classification}

\begin{theorem}\label{T3sieved1}
Let $\{p_n\}_{n\geq0}$ be the monic OPS corresponding to the sieved polynomials $\{c_{n}^{\lambda}(\cdot;k)\}_{n\geq0}$, given by (\ref{Sieved1c}), being $\lambda\in\mathbb{C}\setminus\{-n/2\,:\,n\in\mathbb{N}\}$ and $k\geq3$. Let ${\bf u}$ be the regular functional with respect to which $\{p_n\}_{n\geq0}$ is an OPS. Then
\begin{equation}\label{Eq-de-uc}
D(\Phi{\bf u})=\Psi{\bf u}\;,
\end{equation}
where $\Phi$ and $\Psi$ are polynomials given by
\begin{equation}\label{pol-de-uc}
\Phi(x):=(1-x^2)\widehat{U}_{k-1}(x)\;,\quad
\Psi(x):=-k(2\lambda+1) \widehat{T}_{k}(x)\,.
\end{equation}
Moreover, the corresponding formal Stieltjes series $S_{{\bf u}}(z)$ fulfils
\begin{equation}\label{Stieltjes-monic-cnlambda}
\Phi(z)S_{{\bf u}}^\prime(z)=C(z)S_{{\bf u}}(z)+D(z)\;,
\end{equation}
where $C$ and $D$ are polynomials given by
\begin{equation}\label{PhiCc}
C(z):=z\widehat{U}_{k-1}(z)-2k\lambda\widehat{T}_k(z)\,,\quad
D(z):=-2k\lambda u_0\widehat{U}_{k-1}(z)\,.
\end{equation}
As a consequence, if $\lambda\in\mathbb{C}\setminus\{-n/2:n\in\mathbb{N}_0\}$
then $\{c_{n}^{\lambda}(\cdot;k)\}_{n\geq0}$ is a semiclassical OPS of class $k-1$.
If $\lambda=0$ then $\{c_{n}^{0}(\cdot;k)\}_{n\geq0}$ is (up to normalization) the
Chebychev OPS of the first kind, hence it is a classical OPS.
\end{theorem}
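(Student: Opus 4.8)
The plan is to follow the proof of Theorem \ref{T3} \emph{mutatis mutandis}, transferring a first-order differential equation for the Stieltjes series of $\{q_n\}_{n\geq0}$ to one for $S_{\mathbf u}$ via Theorem \ref{teoStiltjseries}, the two structural differences being that here the underlying ultraspherical parameter is $\lambda$ (not $\lambda+1$) and that the polynomial mapping (\ref{poly-map-cnLc}) corresponds to $m=0$, so that the roles of $\theta$ and $\eta$ are interchanged ($\theta_0\equiv1$, $\eta_{k-1}=\widehat U_{k-1}$). First I would let $\mathbf v$ be the functional of $\{q_n\}_{n\geq0}$ given by (\ref{qnCnlambdac}) and $\mathbf v^{\lambda}$ that of $\{C_n^{\lambda}\}_{n\geq0}$. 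Exactly as in the proof of Theorem \ref{T3}, but now reading the ultraspherical data at parameter $\lambda$ and rescaling through $S_{\mathbf v}(z)=2^{k-1}S_{\mathbf v^{\lambda}}(2^{k-1}z)$, one obtains
$$
\widetilde\Phi(z)\,S_{\mathbf v}'(z)=\widetilde C(z)\,S_{\mathbf v}(z)+\widetilde D(z),
$$
with $\widetilde\Phi(x)=-x^2+4^{1-k}$, $\widetilde C(x)=(1-2\lambda)x$, and $\widetilde D(x)=-2\lambda v_0$ (at $\lambda=0$ these collapse to the Chebyshev data of the first kind, a useful consistency check).

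Next I would specialize Theorem \ref{teoStiltjseries} to $m=0$. The decisive simplification is that, by the convention (\ref{Delt0}), $\Delta_0(2,m-1;\cdot)=\Delta_0(2,-1;\cdot)\equiv0$, which annihilates every term of $C_1$ and $D_1$ carrying that factor; together with $\theta_0\equiv1$ (hence $\theta_0'\equiv0$) and the empty product $\prod_{j=1}^{0}a_0^{(j)}=1$ multiplying $\widetilde D$, the $m=0$ case is especially clean and leaves
$$
\begin{array}{l}
\Phi_1=v_0\,\widehat U_{k-1}\,\widetilde\Phi(\widehat T_k),\\[0.3em]
C_1=v_0\big(\widehat U_{k-1}'\,\widetilde\Phi(\widehat T_k)+\widehat U_{k-1}\,\widehat T_k'\,\widetilde C(\widehat T_k)\big),\\[0.3em]
D_1=u_0\,\widehat U_{k-1}^2\,\widehat T_k'\,\widetilde D(\widehat T_k).
\end{array}
$$
Now I would invoke the Chebyshev identities (\ref{CH12345}): from $\widehat T_k'=k\widehat U_{k-1}$ and $\widehat T_k^2+(1-x^2)\widehat U_{k-1}^2=4^{1-k}$ one gets $\widetilde\Phi(\widehat T_k)=(1-x^2)\widehat U_{k-1}^2$, while $x\widehat U_{k-1}-(1-x^2)\widehat U_{k-1}'=k\widehat T_k$ reduces the inner factor of $C_1$. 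After substitution,
$$
\Phi_1=v_0(1-x^2)\widehat U_{k-1}^3,\quad
C_1=v_0\widehat U_{k-1}^2\big(x\widehat U_{k-1}-2k\lambda\widehat T_k\big),\quad
D_1=-2k\lambda u_0v_0\,\widehat U_{k-1}^3.
$$

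Canceling the common factor $v_0\widehat U_{k-1}^2$ yields at once (\ref{Stieltjes-monic-cnlambda}) with $C$ and $D$ as in (\ref{PhiCc}); the Pearson equation (\ref{Eq-de-uc}) with $\Phi,\Psi$ as in (\ref{pol-de-uc}) then follows from $\Psi=C+\Phi'$, where one more use of $x\widehat U_{k-1}-(1-x^2)\widehat U_{k-1}'=k\widehat T_k$ collapses $\Psi$ to $-k(2\lambda+1)\widehat T_k$. For the class I would argue coprimality of $\Phi,C,D$ as in Theorem \ref{T3}: since $\widehat U_{k-1}$ and $\widehat T_k$ share no zeros, at each zero $\xi$ of $\widehat U_{k-1}$ one has $C(\xi)=-2k\lambda\widehat T_k(\xi)\neq0$, while at $x=\pm1$, using $\widehat U_{k-1}(\pm1)=k(\pm1)^{k-1}$, one has $D(\pm1)=-2k\lambda u_0\widehat U_{k-1}(\pm1)\neq0$ (both for $\lambda\neq0$). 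Hence $\Phi,C,D$ have no common zero and the class equals $\max\{\deg C-1,\deg D\}=k-1$; note that $\deg D=k-1$ forces the maximum to be $k-1$ even when the leading coefficient $1-2k\lambda$ of $C$ happens to vanish. When $\lambda=0$ the three polynomials share the factor $\widehat U_{k-1}$, and after canceling it the reduced data become $\Phi=1-x^2$, $C=x$, $D=0$, i.e. the classical Chebyshev functional of the first kind, so $\{p_n\}_{n\geq0}$ is (up to normalization) Chebyshev of the first kind.

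The bulk of the work—and the only genuine obstacle—is the block of straightforward but lengthy manipulations compressed above: verifying that the identities (\ref{CH12345}) collapse $\Phi_1,C_1,D_1$ to the stated factored forms, with exactly the common factor $v_0\widehat U_{k-1}^2$ and nothing more. The one conceptually delicate point is the evaluation of the $\Delta$-determinants of Theorem \ref{teoStiltjseries} at the boundary index $m=0$, where $\Delta_0(2,-1;\cdot)$ must be read as $0$ through the convention (\ref{Delt0}); this is precisely what suppresses the $\widetilde\Phi$- and $\widetilde C$-contributions to $D_1$ and makes $D$ a pure multiple of $\widehat U_{k-1}$.
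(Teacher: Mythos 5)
Your proposal is correct and follows essentially the same route as the paper's own proof: the same specialization of Theorem \ref{teoStiltjseries} to $m=0$ (with $\theta_0\equiv1$, $\eta_{k-1}=\widehat U_{k-1}$, $\Delta_0(2,-1;\cdot)\equiv0$), the same Chebyshev identities (\ref{CH12345}) to collapse $\Phi_1,C_1,D_1$, the same cancellation of $\widehat U_{k-1}^2$, and the same coprimality argument giving class $k-1$. Your two small additions---handling the degenerate leading coefficient $1-2k\lambda$ of $C$ via $\deg D=k-1$, and deriving the $\lambda=0$ Chebyshev case from the reduced Pearson data rather than dismissing it as trivial---are consistent refinements, not a different method.
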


\begin{proof}
The case $\lambda=0$ is trivial, so we will assume $\lambda\neq0$.
Let $\textbf{v}^\lambda$ be the regular functional
associated with the ultraspherical OPS $\{C_n^{\lambda}\}_{n\geq0}$,
and let $\textbf{v}$ be the regular functional associated with $\{q_n\}_{n\geq0}$
defined by (\ref{qnCnlambdac}).
The relation between the corresponding formal Stieltjes series is
$$S_{\textbf{v}}(z)=2^{k-1}S_{{\textbf{v}^\lambda}}\left(2^{k-1} z\right)\;.$$
Moreover,
\begin{equation}\label{FS3}
\widetilde{\Phi}(z)S_{\textbf{v}}^\prime(z)
=\widetilde{C}(z)S_{\textbf{v}}(z)+\widetilde{D}(z)\,,
\end{equation}
where $\widetilde{\Phi}(x):=-x^2+4^{1-k}$, $\widetilde{C}(x):=-(2\lambda-1)x$, and
$\widetilde{D}(x):=-2\lambda v_0$.
Hence, by Theorem \ref{teoStiltjseries},
\begin{equation}\label{Phi1C1D1c}
\Phi_1(z)S_{{\bf u}}^\prime(z)=C_1(z)S_{{\bf u}}(z)+D_1(z)\;,
\end{equation}
where $\Phi_1$, $C_1$, and $D_1$ are given by
$$\begin{array}{l}
\Phi_1(x):=v_0\eta_{k-1}(x)\widetilde{\Phi}(\pi_k(x))\; , \\
C_1(x):=v_0\eta_{k-1}^\prime(x)\widetilde{\Phi}(\pi_k(x))
+v_0\eta_{k-1}(x)\pi_k^\prime(x)\widetilde{C}(\pi_k(x))\\
D_1(x):=u_0\eta_{k-1}^2(x)\pi_k^\prime(x)\widetilde{D}(\pi_k(x))\,.
\end{array}$$
Now, taking into account (\ref{poly-map-cnLc}), and using relations (\ref{CH12345}),
after straightforward computations and
canceling a common factor $\widehat{U}_{k-1}^2(x)$, we deduce
\begin{equation}\label{Stieltjes-monic-cnlambda}
\Phi(z)S_{{\bf u}}^\prime(z)=C(z)S_{{\bf u}}(z)+D(z)\;,
\end{equation}
where $\Phi$, $C$, and $D$ are given by (\ref{pol-de-uc}) and (\ref{PhiCc}).
Since $\widehat{U}_{k-1}(\pm1)=k(\pm1)^{k-1}$, $\widehat{T}_k(\pm1)=(\pm1)^k$,
and taking into account that $\lambda\neq0$ and $\widehat{U}_{k-1}$ does not share zeros
with $\widehat{T}_k$,
we see that the polynomials $\Phi$, $C$, and $D$ are co-prime,
hence the class of $\textbf{u}$ is equal to
$s=\max\{\deg C-1, \deg D\}=k-1$.
\end{proof}

\subsection{Structure relation and second order linear ODE}

In this section we derive the structure relation
and the second order linear ODE fulfilled by the monic sieved OPS of the first kind
given by (\ref{Sieved1c}), so that
$$p_{n+1}(x)=\vartheta_nc_{n+1}^{\lambda}(x;k)\;,\quad
\vartheta_n:=(2\lambda+1)_{\lfloor n/k\rfloor}/\big\{2^{n}(\lambda+1)_{\lfloor n/k\rfloor}\}
$$
for each $n\in\mathbb{N}_0$, and $p_0(x)\equiv1$.

\begin{theorem}\label{cn-MnNn}
The monic sieved OPS of the first kind
$ p_n(x)=\vartheta_{n-1}c_{n}^{\lambda}(x;k)$
satisfies the structure relation $(\ref{RE})$, where
\begin{equation}\label{relStruc1}
\begin{array}{rcl}
\Phi(x) &=& (1-x^2)\widehat{U}_{k-1}(x)\;, \\ [0.4em]
M_{nk+j}(x) &=& -2(nk+j+\lambda k)\widehat{U}_{k-1}(x) \\ [0.25em]
&&\quad -\frac{\lambda k}{2}\big(\widehat{U}_{j-1}(x)\widehat{U}_{k-j-2}(x)
-\widehat{U}_{j-2}(x)\widehat{U}_{k-j-1}(x)\big)\;, \\ [0.4em]
N_{nk+j}(x) &=& (nk+j+2\lambda k)x\widehat{U}_{k-1}(x)
-\lambda k\epsilon_j\widehat{U}_{k-2}(x) \\ [0.25em]
&&\quad +\frac{\lambda k}{8} \big(\widehat{U}_{j-1}(x)\widehat{U}_{k-j-3}(x)
-\widehat{U}_{j-2}(x)\widehat{U}_{k-j-2}(x)\big)
\end{array}
\end{equation}
for every $n=0,1,2,\ldots$ and $0\leq j\leq k-1$,
being $\epsilon_{k-1}:=1$, $\epsilon_{0}:=0$, and
$\epsilon_j:=\frac{1}{2}$ for $1\leq j\leq k-2$.
\end{theorem}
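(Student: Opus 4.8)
The plan is to parallel the proof of Theorem~\ref{Bn-MnNn}, now in the case $m=0$ fixed by (\ref{poly-map-cnLc}). The starting point is the representation provided by (\ref{pnblockmp2}): since here $\theta_0\equiv1$ and $\eta_{k-1}=\widehat{U}_{k-1}$, with $\Delta_n(2,j;x)=\widehat{U}_j(x)$ and $\Delta_n(j+3,k-1;x)=\widehat{U}_{k-j-2}(x)$, it takes the form
\begin{equation*}
\widehat{U}_{k-1}(x)\,p_{kn+j+1}(x)=\widehat{U}_j(x)\,q_{n+1}(\widehat{T}_k(x))+\Big(\textstyle\prod_{i=1}^{j+1}a_n^{(i)}\Big)\,\widehat{U}_{k-j-2}(x)\,q_n(\widehat{T}_k(x)),
\end{equation*}
supplemented by the block-boundary identities $p_{kn}(x)=q_n(\widehat{T}_k(x))$ and $p_{(n+1)k}(x)=q_{n+1}(\widehat{T}_k(x))$, where $\{q_n\}_{n\ge0}$ is, up to the affine change (\ref{qnCnlambdac}), the ultraspherical OPS of parameter $\lambda$. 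The essential new feature compared with the second kind is that $\widehat{U}_{k-1}$ now enters as a \emph{divisor} (it was the factor $\theta_{k-1}$ there); the whole computation must be arranged so that it cancels, and the mechanism for this is the first identity in (\ref{CH12345}), which gives $\widetilde{\Phi}(\widehat{T}_k(x))=(1-x^2)\widehat{U}_{k-1}^2(x)$ for $\widetilde{\Phi}(x)=4^{1-k}-x^2$.

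First I would differentiate the displayed relation, replacing $\widehat{T}_k'$ by $k\widehat{U}_{k-1}$ throughout, and then eliminate the derivatives $q_n'(\widehat{T}_k)$, $q_{n+1}'(\widehat{T}_k)$ by means of the classical structure relation for $\{q_n\}$, namely $\widetilde{\Phi}(x)q_n'(x)=\widetilde{M}_n(x)q_{n+1}(x)+\widetilde{N}_n(x)q_n(x)$ with $\widetilde{M}_n(x)=-2(\lambda+n)$ and $\widetilde{N}_n(x)=(n+2\lambda)x$ (the parameter $\lambda$ here replacing the $\lambda+1$ used in (\ref{RE1})). Following the pattern that led to (\ref{Rel1}), I would first multiply by a suitable Chebyshev factor so that the $(1-x^2)\widehat{U}_{k-1}^2(x)$ produced by $\widetilde{\Phi}(\widehat{T}_k)$ is absorbed; after cancelling the spurious powers of $\widehat{U}_{k-1}$ this should yield a relation of the form $\Phi(x)p_{nk+j}'(x)=\mathscr{L}_2(x)p_{nk+j}(x)+\mathscr{L}_3(x)p_{nk}(x)+\mathscr{L}_4(x)p_{(n+1)k}(x)$, with $\Phi=(1-x^2)\widehat{U}_{k-1}$ as claimed, the block-boundary polynomials $p_{nk}$ and $p_{(n+1)k}$ now playing the role that $p_{nk+k-1}$ and $p_{(n+1)k+k-1}$ played in (\ref{relacaoP1}). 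The intra-block three-term recurrences, which are of pure Chebyshev type because $a_n^{(i)}=\tfrac14$ for $2\le i\le k-1$, then express $p_{(n+1)k}$ through $p_{nk+j}$ and $p_{nk+j+1}$ (the first-kind analogue of (\ref{relacoesP})), collapsing the right-hand side to the two-term form $M_{nk+j}p_{nk+j+1}+N_{nk+j}p_{nk+j}$ over the generic range of $j$ after dividing out a common Chebyshev factor.

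What remains is to bring the coefficients into the closed form (\ref{relStruc1}) using the elementary identities (\ref{CH12345}) together with the Tur\'an-type determinant (\ref{UnUmx}), and to settle the finitely many boundary indices. As in Theorem~\ref{Bn-MnNn}, the extreme values of $j$---where arguments such as $\widehat{U}_{j-2}$ or $\widehat{U}_{k-j-4}$ leave the generic range and where the special coefficients $a_n^{(0)},a_n^{(1)}$ intervene---must be treated apart; these are pinned down by the recurrences (\ref{RRMN}), seeded with the initial data $N_{-1}=-C$, $M_{-1}=0$, $M_0=u_0^{-1}D$ read off from $\Phi$, $C$, $D$ of Theorem~\ref{T3sieved1}. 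The principal obstacle I anticipate is precisely this interaction: keeping every expression polynomial while the $\widehat{U}_{k-1}$ divisor is cleared, and simultaneously tracking the endpoint corrections. It is there that the discrete weights $\epsilon_{k-1}=1$, $\epsilon_0=0$, and $\epsilon_j=\tfrac12$ $(1\le j\le k-2)$ surface, encoding how the passage from parameter $\lambda+1$ to $\lambda$ redistributes the contributions at the two ends of each block relative to the second-kind formulas (\ref{relStruc}).
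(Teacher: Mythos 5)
Your proposal follows essentially the same route as the paper's own proof: starting from the $m=0$ polynomial-mapping representation (\ref{pnblockmp2}) (the paper's (\ref{TPSieved1c})), differentiating, eliminating $q_n'(\widehat{T}_k)$ and $q_{n+1}'(\widehat{T}_k)$ via the classical ultraspherical structure relation with parameter $\lambda$, collapsing the resulting mixed relation through the intra-block three-term recurrences, simplifying with (\ref{CH12345}) and (\ref{UnUmx}), and settling the boundary indices with (\ref{RRMN}). The only inessential differences are bookkeeping: the paper's intermediate relation (\ref{relacaoP1c}) carries $p_{(n+1)k}$ and $p_{(n+2)k}$ rather than your $p_{nk}$ and $p_{(n+1)k}$, and it treats $j=0$ by direct differentiation of $p_{nk}=q_n(\widehat{T}_k)$, reserving (\ref{RRMN}) only for $j=k-1$.
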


\begin{proof}
Making $m=0$ in (\ref{pnblockmp2}) and taking into account (\ref{poly-map-cnLc}), we obtain
\begin{equation}\label{TPSieved1c}
p_{kn+j}(x)= \mathcal{A}_j(x)q_{n+1}(\widehat{T}_k(x))+4^{1-j}a_n^{(1)} \mathcal{B}_j(x)q_{n}(\widehat{T}_k(x))
\end{equation}
for $n=0,1,2,\ldots$ and $j=1,2,\ldots,k$, where
$\mathcal{A}_j(x):=\widehat{U}_{j-1}(x)/\widehat{U}_{k-1}(x)$ and
$\mathcal{B}_j(x):=\widehat{U}_{k-j-1}(x)/\widehat{U}_{k-1}(x)$.
Taking derivatives in both sides of (\ref{TPSieved1c}), we obtain
\begin{equation}\label{DSieved1c}
\begin{array}l
p_{kn+j}^\prime(x)= \displaystyle
\mathcal{A}_j^\prime(x)q_{n+1}(\widehat{T}_k(x))+\mathcal{C}_j(x)q_{n+1}^\prime(\widehat{T}_k(x))\\
 \rule{0pt}{1.2em}\qquad\qquad\quad  +4^{1-j}a_n^{(1)} \mathcal{B}_j^\prime(x)q_{n}(\widehat{T}_k(x))
  +\mathcal{D}_j(x)q_{n}^\prime(\widehat{T}_k(x))\,,
\end{array}\end{equation}
where
$\mathcal{C}_j(x):=\mathcal{A}_j(x)\widehat{T}_k^\prime(x)$ and $\mathcal{D}_j(x):=4^{1-j}a_n^{(1)} \mathcal{B}_j(x)\widehat{T}_k^\prime(x)$.
Multiplying both sides of (\ref{DSieved1c}) by $\mathcal{B}_j(x)$ and using (\ref{TPSieved1c}),
we deduce
\begin{equation}\label{Rel1c}
\begin{array}{l}
\mathcal{B}_j(x)\left(\mathcal{C}_j(x)q_{n+1}^\prime(\widehat{T}_k(x))
+\mathcal{D}_j(x)q_{n}^\prime(\widehat{T}_k(x))\right) \\
\rule{0pt}{1.2em}\qquad\quad =\mathcal{B}_j(x)p_{nk+j}^\prime(x)
-\mathcal{B}_j^\prime(x)p_{nk+j}(x)\\
\rule{0pt}{1.2em}\qquad\qquad +\left(\mathcal{A}_j(x)\mathcal{B}_j^\prime(x)
-\mathcal{A}_j^\prime(x)\mathcal{B}_j(x)\right)q_{n+1}(\widehat{T}_k(x))\;.
\end{array}
\end{equation}
Now, since $\{q_n\}_{n\geq0}$ is a classical OPS, it fulfills the structure relation (see e.g. \cite{Maroni})
\begin{equation}\label{RE1c}
\tilde{\Phi}(x)q_{n}^\prime(x)=\tilde{M}_n(x)q_{n+1}(x)+\tilde{N}_n(x)q_{n}(x)\; ,
\end{equation}
being
$\tilde{\Phi}(x)=4^{1-k}-x^2$, $\tilde{N}_n(x)=(n+2\lambda)x$, and $\tilde{M}_n(x)=-2(\lambda+n)$.
Substituting $x$ by $\widehat{T}_k(x)$ in (\ref{RE1c}), and then multiplying both sides of the resulting equation by
$\mathcal{B}_j(x)\mathcal{D}_j(x)$, one obtains a certain equation.
Similarly, substituting $x$ by $\widehat{T}_k(x)$ in (\ref{RE1c}), and then changing $n$ into $n+1$ and multiplying
both sides of the resulting equation by $\mathcal{B}_j(x)\mathcal{C}_j(x)$, we obtain a second equation. Adding these two equations and using (\ref{TPSieved1c}) and (\ref{Rel1c}), we deduce
\begin{equation}\label{relacaoP1c}
\mathscr{S}_1(x)p_{nk+j}^\prime(x)=\mathscr{S}_2(x)p_{nk+j}(x)+\mathscr{S}_3(x)p_{(n+1)k}(x)+\mathscr{S}_4(x)p_{(n+2)k}(x)\,,
\end{equation}
where  $\mathscr{S}_1$, $\mathscr{S}_2$, $\mathscr{S}_3$, and $\mathscr{S}_4$ are polynomials defined by
\begin{equation}\label{Lic}
\hspace*{-0.1cm}\begin{array}l
\mathscr{S}_1(x):=\mathcal{B}_j(x)\tilde{\Phi}\big(\widehat{T}_k(x)\big)\;,\\ [0.5em]
\mathscr{S}_2(x):=\mathcal{B}_j^\prime(x)\tilde{\Phi}\big(\widehat{T}_k(x)\big)+
\mathcal{B}_j(x)\widehat{T}_k^\prime(x)\tilde{N}_{n}\big(\widehat{T}_k(x)\big)\;,\\  [0.5em]
\mathscr{S}_3(x):=\left(\mathcal{A}_j^\prime(x)\mathcal{B}_j(x)-\mathcal{A}_j(x)\mathcal{B}_j^\prime(x)\right)\tilde{\Phi}\big(\widehat{T}_k(x)\big)\\
\rule{0pt}{1.2em}\qquad\qquad\qquad + \mathcal{B}_j(x)\left(\mathcal{C}_j(x)\tilde{N}_{n+1}\big(\widehat{T}_k(x)\big)+\mathcal{D}_j(x)\tilde{M}_{n}
\big(\widehat{T}_k(x)\big)\right) \\
\rule{0pt}{1.2em}\qquad\qquad\qquad -
\mathcal{A}_j(x)\mathcal{B}_j(x)\widehat{T}_k^\prime(x)\tilde{N}_{n}\big(\widehat{T}_k(x)\big)\,,\\  [0.5em]
\mathscr{S}_4(x):=\mathcal{B}_j(x)\mathcal{C}_j(x)\tilde{M}_{n+1}\big(\widehat{T}_k(x)\big)\;.
\end{array}
\end{equation}
Taking into account the three-term recurrence relation for $\{p_n\}_{n\geq0}$, we deduce
\begin{equation}\label{relacoesPc}
\begin{array}{rcl}
p_{(n+2)k}(x)&=&\big(x\widehat{U}_{k-1}(x)-a_{n+1}^{(1)}\widehat{U}_{k-2}(x)\big)p_{(n+1)k}(x) \\ [0.25em]
&&\quad -a_{n+1}^{(0)}\widehat{U}_{k-1}(x)p_{(n+1)k-1}(x)\; ,\\ [0.5em]
p_{nk+k-i}(x)&=&\widehat{U}_{k-j-i-1}(x)p_{nk+j+1}(x)-\frac{1}{4}\widehat{U}_{k-j-i-2}(x)p_{nk+j}(x)
\end{array}
\end{equation}
for every $n\in\mathbb{N}_0$ and $1\leq j\leq k-i-1$, $i=0, 1$.
Substituting (\ref{relacoesPc}) in (\ref{relacaoP1c}), we obtain
$$
\mathscr{S}_1(x)p_{nk+j}^\prime(x)=N_{nk+j}(x)p_{nk+j}(x)+M_{nk+j}(x)p_{nk+j+1}(x)
$$
for every $n=0,1,2, \ldots$ and $1\leq j\leq k-2$, where
\begin{equation}\label{dc}
\begin{array}{l}
M_{nk+j}(x):=\mathcal{K}_1(x)\widehat{U}_{k-j-1}(x)-\mathcal{K}_2(x)\widehat{U}_{k-j-2}(x)\\
N_{nk+j}(x):=\mathscr{S}_2(x)-\frac{1}{4}\mathcal{K}_1(x)\widehat{U}_{k-j-2}(x)
+\frac{1}{4}\mathcal{K}_2(x)\widehat{U}_{k-j-3}(x)\,,
\end{array}
\end{equation}
being
$$
\begin{array}{l}
\mathcal{K}_1(x):=\mathscr{S}_3(x)
+\mathscr{S}_4(x)\big(x\widehat{U}_{k-1}(x)
-a_{n+1}^{(1)}\widehat{U}_{k-2}(x)\big)\,, \\
\mathcal{K}_2(x):=\mathscr{S}_4(x)a_{n+1}^{(0)}\widehat{U}_{k-1}(x)\,.
\end{array}
$$
Using some basic properties of Chebyshev polynomials we may verify that,
up to the factor $\widehat{U}_{k-j-1}(x)$, the relations
\begin{equation}\label{LMNjc}\begin{array}{rcl}
\mathscr{S}_1(x)&=&(1-x^2)\widehat{U}_{k-1}(x)\\ [0.5em]
M_{nk+j}(x)&=&-2(nk+j+\lambda k)\widehat{U}_{k-1}(x) \\
&&\quad-\frac{\lambda k}{2}\left(\widehat{U}_{j-1}(x)\widehat{U}_{k-j-2}(x)
-\widehat{U}_{j-2}(x)\widehat{U}_{k-j-1}(x)\right)\\ [0.5em]
N_{nk+j}(x)&=&(nk+j+2\lambda k)x\widehat{U}_{k-1}(x)-\frac{\lambda k}{2}\widehat{U}_{k-2}(x)\\
&&\quad +\frac{\lambda k}{8}\left(\widehat{U}_{j-1}(x)\widehat{U}_{k-j-3}(x)
-\widehat{U}_{j-2}(x)\widehat{U}_{k-j-2}(x)\right)
\end{array}
\end{equation}
hold for every $n\in\mathbb{N}_0$ and $1\leq j\leq k-2$.
Moreover, taking $j=k$ in (\ref{TPSieved1c}) and then changing $n+1$ into $n$,
we obtain $p_{nk}(x)=q_{n}(\widehat{T}_k(x))$, hence
\begin{equation}\label{Pk-1c}
\begin{array}{l}
p_{nk}^\prime(x)=\widehat{T}_k^\prime(x)q_{n}^\prime(\widehat{T}_k(x))\,.
\end{array}
\end{equation}
Substituting $x$ by $\widehat{T}_k(x)$ in (\ref{RE1c}) and multiplying both sides of (\ref{RE1c}) by $\widehat{T}_k^\prime(x)$
and taking into account (\ref{Pk-1c}) and (\ref{relacoesPc}), we obtain, up to the factor
$\widehat{U}_{k-1}(x)$,
$$
\mathscr{S}_1(x)p_{kn}^\prime(x)=M_{nk}(x)p_{nk+1}(x)+N_{nk}(x)p_{nk}(x)\;,
$$
where
\begin{equation}\label{MNk-1c}
M_{nk}(x):=-2k(\lambda+n)\widehat{U}_{k-1}(x)\;,\quad
N_{nk}(x):=k(n+2\lambda)x\widehat{U}_{k-1}(x)\,.
\end{equation}
Taking into account (\ref{RRMN}), (\ref{PhiCc}), (\ref{LMNjc}), and (\ref{MNk-1c}),
and using again some basic properties of the
Chebyshev polynomials, we deduce
\begin{equation}\label{Nk-2c}
\begin{array}{rcl}
N_{nk+k-1}(x)
&=& \mbox{$\frac{1}{x}$}\big(-\Phi(x)+\frac{1}{4}M_{nk+k-2}(x)-a_{n+1}^{(0)}M_{(n+1)k}(x)\big)+N_{nk+k-2}(x) \\ [0.25em]
&=&\big(nk+k-1+2\lambda k\big)x\widehat{U}_{k-1}(x)-\lambda k \widehat{U}_{k-2}(x)\,.
\end{array}
\end{equation}
Finally, taking into account (\ref{RRMN}), (\ref{PhiCc}), (\ref{LMNjc}), and (\ref{Nk-2c}), we obtain
$$
\begin{array}{ll}
xM_{nk+k-1}(x)&=-N_{nk+k-1}(x)-N_{nk+k-2}(x)-C(x)\\
&=-2(nk+k-1+\lambda k)x\widehat{U}_{k-1}(x)+\frac{\lambda k}{2}x\widehat{U}_{k-3}(x)\\
\end{array}$$
hence
\begin{equation}\label{Mk-2c}
\begin{array}l
M_{nk+k-1}(x)=-2(nk+k-1+\lambda k)\widehat{U}_{k-1}(x)+\frac{\lambda k}{2}\widehat{U}_{k-3}(x)
\,.
\end{array}
\end{equation}
Thus the proof is complete.
\end{proof}

\begin{remark}
We can give alternative expressions for the polynomials $M_n$ and $N_n$
appearing in (\ref{relStruc1}). Indeed, taking into account (\ref{UnUmx}),
we may write
$$
\begin{array}l M_{nk+j}(x)=-2(nk+j+\lambda
k\delta_j)\widehat{U}_{k-1}(x)-\frac{\lambda k}{2} U_{k,j}(x)\;, \\ [0.5em]
N_{nk+j}(x)=(nk+j+2\lambda k)x\widehat{U}_{k-1}(x)-\frac{\lambda
k}{2}\widehat{U}_{k-2}(x)+\frac{\lambda k}{2} V_{k,j}(x)\,, \\
\end{array}
$$
where $\delta_j:=1$ if $1\leq j\leq k-1$, $\delta_{0}:=0$, and
$U_{k,j}$ and $V_{k,j}$ are polynomials defined by
$$
U_{k,j}(x):=\left\{\begin{array}{lll}
4^{1-j}\widehat{U}_{k-1-2j}(x) & \mbox{\rm if} & j=0,1,\ldots,\lfloor\frac{k-1}{2}\rfloor\\ [0.25em]
-4^{-k+j+1}\widehat{U}_{2j-k-1}(x) & \mbox{\rm if} & j=1+\lfloor\frac{k-1}{2}\rfloor,\ldots,
k-1\;,\\\end{array}\right.
$$
$$
V_{k,j}(x):=\left\{\begin{array}{lll}
4^{-j}\widehat{U}_{k-2-2j}(x) & \mbox{\rm if} & j=0,1,\ldots,\lfloor\frac{k-2}{2}\rfloor\\ [0.25em]
-4^{-k+j+1}\widehat{U}_{2j-k}(x) & \mbox{\rm if} & j=1+\lfloor\frac{k-2}{2}\rfloor,\ldots,
k-1\,.\\\end{array}\right.
$$
\end{remark}

\medskip

\begin{theorem}\label{cn-JnKnLn}
The monic sieved OPS of the first kind
$p_n(x)=\vartheta_{n-1}c_{n}^{\lambda}(x;k)$
satisfies the second order ODE $(\ref{ED2O})$, where
\begin{equation}\label{JKL1}\begin{array}{rcl}
J_{nk+j}(x)&=& \Phi(x)M_{nk+j}(x)\;,\\ [0.25em]
K_{nk+j}(x)&=& \Psi(x)M_{nk+j}(x)-\Phi(x) M_{nk+j}^\prime(x)\;,\\ [0.25em]
L_{nk+j}(x)&=& N_{nk+j}(x)M_{nk+j}^\prime(x)+\big(\Omega_j(x)-N_{nk+j}^\prime(x)\big)M_{nk+j}(x)
\end{array}\end{equation}
for all $n\geq1$ and $0\leq j\leq k-1$, being $M_{nk+j}$ and $N_{nk+j}$ given by $(\ref{relStruc})$, and
\begin{equation}\label{PhiPsi-cn}
\begin{array}{l}
\Phi(x):=(1-x^2)\widehat{U}_{k-1}(x)\, ,\quad
\Psi(x):=-k(2\lambda+1) \widehat{T}_{k}(x)\;, \\ [0.25em]
\Omega_j(x) = (nk+j+1)(nk+j+2\lambda k)\widehat{U}_{k-1}(x)
+\frac{\lambda k}{2}\widehat{U}_{j-1}(x)\widehat{U}_{k-j-2}(x)\,.
\end{array}
\end{equation}
\end{theorem}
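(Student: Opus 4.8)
The plan is to proceed exactly as in the proof of Theorem~\ref{Bn-JnKnLn}, exploiting the general formulas (\ref{PED2O}) that express the coefficients of the ODE (\ref{ED2O}) in terms of the data $\Phi$, $\Psi$, $C$, $M_n$, $N_n$ of the structure relation. Here Theorem~\ref{cn-MnNn} already supplies $M_{nk+j}$ and $N_{nk+j}$ explicitly, while Theorem~\ref{T3sieved1} supplies $\Phi$, $\Psi$, and $C$ (with $\Psi=C+\Phi'$). Hence the whole statement will reduce to a single polynomial identity among the Chebyshev polynomials $\widehat{U}_\ell$ and $\widehat{T}_k$.

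First I would observe that the expressions for $J_{nk+j}$ and $K_{nk+j}$ in (\ref{JKL1}) are literally the first two lines of (\ref{PED2O}) at index $nk+j$, so they require nothing beyond inserting the $\Phi$ and $\Psi$ of (\ref{PhiPsi-cn}). All the content therefore sits in the formula for $L_{nk+j}$. Writing $W(f,g):=fg'-f'g$ as in (\ref{PED2O}), and using
\[
N_{nk+j}M_{nk+j}'+\big(\Omega_j-N_{nk+j}'\big)M_{nk+j}
= W(N_{nk+j},M_{nk+j})+\Omega_j\,M_{nk+j},
\]
a comparison with the third line of (\ref{PED2O}) shows that the claimed value of $L_{nk+j}$ is equivalent to the scalar identity
\[
\Omega_j(x)
=\frac{\gamma_{nk+j+1}\,M_{nk+j}(x)M_{nk+j+1}(x)-N_{nk+j}(x)\big(N_{nk+j}(x)+C(x)\big)}{\Phi(x)},
\]
where, reading off the block recurrence (\ref{pnblock1}) via $\gamma_{nk+j}=a_n^{(j)}$, one has $\gamma_{nk+j+1}=a_n^{(j+1)}$, with the values $a_n^{(j)}$ of the first-kind family as recorded earlier in Section~5.

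The hard part will be verifying this last identity, where all the real work lies. The plan is to substitute the explicit $M_{nk+j}$ and $N_{nk+j}$ from (\ref{relStruc1}), the $\Phi$ and $C$ from (\ref{pol-de-uc}) and (\ref{PhiCc}), and $\gamma_{nk+j+1}=a_n^{(j+1)}$, and then to reduce the numerator using the Chebyshev identities in (\ref{CH12345}) and (\ref{UnUmx}) together with $\widehat{U}_m^2(x)-\widehat{U}_{m+1}(x)\widehat{U}_{m-1}(x)=4^{-m}$, the very relation used in the second-kind case. Two things must emerge simultaneously: the numerator must be exactly divisible by $\Phi=(1-x^2)\widehat{U}_{k-1}$, and the resulting quotient must collapse to $\Omega_j$. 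The delicate point is to organize the $j$-dependent products of Chebyshev polynomials so that they telescope precisely into the term $\tfrac{\lambda k}{2}\widehat{U}_{j-1}(x)\widehat{U}_{k-j-2}(x)$ of $\Omega_j$, while the remaining contributions---built from the leading pieces $-2(nk+j+\lambda k)\widehat{U}_{k-1}$ of $M_{nk+j}$ and $(nk+j+2\lambda k)x\widehat{U}_{k-1}$ of $N_{nk+j}$---assemble into the multiple $(nk+j+1)(nk+j+2\lambda k)\widehat{U}_{k-1}(x)$ of $\widehat{U}_{k-1}$. Since this computation is structurally identical to the one in Theorem~\ref{Bn-JnKnLn}, with the index shifts coming from $m=0$ rather than $m=k-1$ and $\lambda$ in place of $\lambda+1$, I would carry it out in the same manner, treating the boundary indices $j=0$ and $j=k-1$ separately, where the formulas for $M_{nk+j}$ and $N_{nk+j}$ degenerate.
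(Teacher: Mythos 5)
Your proposal is correct and follows essentially the same route as the paper: the paper likewise takes $J_{nk+j}$ and $K_{nk+j}$ directly from the first two lines of (\ref{PED2O}) and reduces the formula for $L_{nk+j}$ to the single identity $\big(a_n^{(j+1)}M_{nk+j}(x)M_{nk+j+1}(x)-N_{nk+j}(x)(N_{nk+j}(x)+C(x))\big)/\Phi(x)=\Omega_j(x)$, which is exactly the scalar identity you isolate (with $\gamma_{nk+j+1}=a_n^{(j+1)}$). The paper then asserts this identity via the same Chebyshev tools you cite, namely (\ref{CH12345}) and $\widehat{U}_{m}^2(x)-\widehat{U}_{m+1}(x)\widehat{U}_{m-1}(x)=4^{-m}$, without displaying the computation, so your plan matches it in both structure and level of detail.
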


\begin{proof}
The first two equalities in (\ref{JKL1}) follow immediately from (\ref{PED2O}).
To prove the third equality in (\ref{JKL1}), we only need to take into account
the third equality in (\ref{PED2O}) and noticing that, using basic properties of the
Chebyshev polynomials, as well as the relations
$\widehat{U}_{m}^2(x)-\widehat{U}_{m+1}(x)\widehat{U}_{m-1}(x)=4^{-m}$ ($m=0,1,2,\ldots$),
the equality
$$\frac{a_n^{(j+1)}M_{nk+j}(x)M_{nk+j+1}(x)-N_{nk+j}(x)\big(N_{nk+j}(x)+C(x)\big)}{\Phi(x)}=\Omega_j(x)$$
holds for every $n\in\mathbb{N}_0$ and $0\leq j\leq k-1$.
\end{proof}

\section{Application: an electrostatic model}\label{Section-Electrostatic}

The theory presented in the previous sections leads to interesting electrostatic models.
For background on electrostatics of OP we refer the reader to the books by Szeg\"{o} \cite[pp.\;140--142]{Szego}
and Ismail \cite[Chapter\;3]{Ismail}, and the articles by Ismail \cite{Ismail-e1,Ismail-e2}
and Marcell\'an et. al. \cite{Paco-e1}.
Fix an integer number $k$, with $k\geq3$, and let $n$ be a multiple of $k$,
so there exists $\ell\in\mathbb{N}$ such that
$$
n=k\ell\;.
$$
Suppose that $n$ unit charges at points $x_1<x_2<\ldots<x_n$ are distributed on the set
$(-1,1)\setminus Z_{U_{k-1}}$, where
$$Z_{U_{k-1}}:=\Big\{\cos\frac{j\pi}{k}\,|\,j=1,2,\ldots,k-1\Big\}$$
is the set of zeros of the Chebyshev polynomial of the second kind of degree $k-1$,
in such a way that each one of the $k$ open intervals intervals
$\big]-1,\cos\frac{(k-1)\pi}{k}\big[$, $\big]\cos\frac{(k-1)\pi}{k},\cos\frac{(k-2)\pi}{k}\big[$, ...\,,
$\big]\cos\frac{\pi}{k},1\big[$
contains precisely $\ell$ points, i.e.,
\begin{equation}\label{k-intervals}
\cos\frac{(k-j)\pi}{k}<x_{j\ell+1}<x_{j\ell+2}<\cdots<x_{(j+1)\ell}<\cos\frac{(k-j-1)\pi}{k}
\end{equation}
for each $j=0,1,\ldots,k-1$.
In addition, assume that both $-1$ and $+1$ have the same charge $q\geq\frac14$,
as well as there are equal charges at each point of $Z_{U_{k-1}}$,
being $\widetilde{q}:=2q-\frac12$ the common charge at each of these points.
Figure \ref{fig1} illustrates the situation.
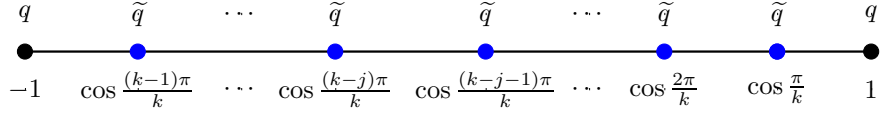
\begin{figure}\begin{center}
\begin{tikzpicture}[xscale=0.5,yscale=0.5] 
\draw [line width=0.3mm, black ] (0,-3) -- (22.5,-3) ;;
\draw[black,fill] (0,-3) circle (0.2cm);;
\draw [line width=0.3mm] (0,-4) -- (0,-4) node [] {$-1$};;
\draw [line width=0.3mm] (0,-2) -- (0,-2) node [] {$q$};;
\draw[blue,fill] (3,-3) circle (0.2cm);;
\draw [line width=0.3mm] (3,-4) -- (3,-4) node [] {$\cos\frac{(k-1)\pi}{k}$};;
\draw [line width=0.3mm] (3,-2) -- (3,-2) node [] {$\widetilde{q}$};;
\draw [line width=0.3mm] (5.75,-4) -- (5.75,-4) node [] {$\cdots$};;
\draw [line width=0.3mm] (5.75,-2) -- (5.75,-2) node [] {$\cdots$};;
\draw[blue,fill] (8.25,-3) circle (0.2cm);;
\draw [line width=0.3mm] (8.25,-4) -- (8.25,-4) node [] {$\cos\frac{(k-j)\pi}{k}$};;
\draw [line width=0.3mm] (8.25,-2) -- (8.25,-2) node [] {$\widetilde{q}$};;
\draw[blue,fill] (12.25,-3) circle (0.2cm);;
\draw [line width=0.3mm] (12.25,-4) -- (12.25,-4) node [] {$\cos\frac{(k-j-1)\pi}{k}$};;
\draw [line width=0.3mm] (12.25,-2) -- (12.25,-2) node [] {$\widetilde{q}$};;
\draw [line width=0.3mm] (15,-4) -- (15,-4) node [] {$\cdots$};;
\draw [line width=0.3mm] (15,-2) -- (15,-2) node [] {$\cdots$};;
\draw[blue,fill] (17,-3) circle (0.2cm);;
\draw [line width=0.3mm] (17,-4) -- (17,-4) node [] {$\cos\frac{2\pi}{k}$};;
\draw [line width=0.3mm] (17,-2) -- (17,-2) node [] {$\widetilde{q}$};;
\draw[blue,fill] (20,-3) circle (0.2cm);;
\draw [line width=0.3mm] (20,-4) -- (20,-4) node [] {$\cos\frac{\pi}{k}$};;
\draw [line width=0.3mm] (20,-2) -- (20,-2) node [] {$\widetilde{q}$};;
\draw[blue,fill] (20,-3) circle (0.2cm);;
\draw [line width=0.3mm] (22.5,-4) -- (22.5,-4) node [] {$1$};;
\draw [line width=0.3mm] (22.5,-2) -- (22.5,-2) node [] {$q$};;
\draw[black,fill] (22.5,-3) circle (0.2cm);
\end{tikzpicture}
\caption{\small The charges in the fixed positions in the electrostatic model}
\label{fig1}
\end{center}
\end{figure}
All these charges interact and repel each other according to the law of logarithm potential.
The energy of these electrostatic charges is therefore represented by
\begin{equation}\label{Energy}
E(x_1,\ldots,x_n):=-\sum_{i=1}^n\sum_{\substack{j=1\\j\neq i}}^n\ln|x_i-x_j|
-2q\sum_{i=1}^n\ln(1-x_i^2)-2\widetilde{q}\sum_{i=1}^{n}\ln\big|\widehat{U}_{k-1}(x_i)\big|
\end{equation}
We regard $E$ as a function defined on the $n-$dimensional cube $[-1,1]^n$, and so
$$
E(x_1,\cdots,x_n)=+\infty\quad\mbox{\rm if}\quad (x_1,\cdots,x_n)\in\Xi\;,
$$
where $\Xi:=\Lambda\cup\big\{(x_1,\cdots,x_n)\in[-1,1]^n\,|\,\mbox{$x_i\in\{-1,1\}\cup Z_{U_{k-1}}$ for some $i$}\big\}$,
and
$$
\Lambda:=\big\{(x_1,\cdots,x_n)\in\mathbb{R}^n\,|\,\mbox{\rm $x_i=x_j$ for some pair $(i,j)$, with $i\neq j$}\,\big\}\;.
$$
The local minima of $E(x_1,\cdots,x_n)$ correspond to the electrostatic equilibrium.
These minima cannot be attained at points of the set $\Xi$
(since $E=+\infty$ on $\Xi$). Therefore for finding the points where $E$ attains minima,
we may regard $E$ as a function defined on the open set
$\Omega:=\Sigma\setminus\Lambda$, where $\Sigma$ is the $n-$dimensional open rectangle
$$
\Sigma:=\prod_{j=1}^{k}\Big]\cos\frac{(k-j+1)\pi}{k},\cos\frac{(k-j)\pi}{k}\Big[^{\,\ell}\;.
$$
In order to find the minimum of $E$ we need to solve the system of equations
\begin{equation}\label{partialL}
\frac{\partial E}{\partial x_\nu}=0\;,\quad \nu=1,2,\ldots,n\;.
\end{equation}
Using the relation $\;\widehat{U}_{k-1}'(x)/\widehat{U}_{k-1}(x)
=\sum_{j=1}^{k-1}1/\big(x-\cos\frac{j\pi}{k}\big)\,$,
we compute
\begin{equation}\label{partialLxnu}
\frac{\partial E}{\partial x_\nu}=
-2\sum_{\substack{i=1\\i\neq \nu}}^n\frac{1}{x_\nu-x_i}
-4q\frac{x_\nu}{x_\nu^2-1}-2\widetilde{q}\sum_{j=1}^{k-1}\frac{1}{x_\nu-\cos\frac{j\pi}{k}}
\end{equation}
for each $\nu=1,2,\ldots,n$. Therefore, setting
$\;p_n(x):=(x-x_1)(x-x_2)\cdots(x-x_n)\,$,
we see that (\ref{partialL}) can be rewritten as
\begin{equation}\label{pxDeriv}
\frac{p_n^{''}(x_\nu)}{p_n^{'}(x_\nu)}=
-\frac{2q}{x_\nu-1}-\frac{2q}{x_\nu+1}
-\sum_{j=1}^{k-1}\frac{2\widetilde{q}}{x_\nu-\cos\frac{j\pi}{k}}\;,\quad 1\leq\nu\leq n\;.
\end{equation}
In the next theorem we show that the electrostatic equilibrium can
be described in terms of the zeros of the sieved ultraspherical polynomial
of the first kind $c_n^{\lambda}(\cdot;k)$, for an appropriate choice of $\lambda$.

\begin{theorem}\label{Thm-electrost-sieved}
Let $k,\ell\in\mathbb{N}$, being $k\geq3$, and let $n:=k\ell$. 
Then the energy $(\ref{Energy})$ of the system with $n$ unit charges at
$x_1<x_2<\cdots< x_n$ on $[-1,1]$ subject to condition $(\ref{k-intervals})$, 
with charges $q\geq\frac14$ at the points $\pm1$ and charges
$\widetilde{q}:=2q-\frac12$ at the points $\cos\frac{j\pi}{k}$, $1\leq j\leq k-1$,
is minimal when $x_1,\ldots,x_n$ are the zeros of the sieved ultraspherical polynomial
of the first kind $c_n^{\lambda}(x;k)\equiv \frac{\ell !}{(2\lambda)_\ell}\,C_\ell^{\lambda}\big(T_k(x)\big)$,
where $\lambda:=2q-\frac12$. Moreover, the equilibrium position is unique.
\end{theorem}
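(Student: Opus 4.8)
The plan is to verify that the zeros of $c_n^{\lambda}(\cdot;k)$ form a critical point of the energy $E$ on the admissible region, and then to prove that $E$ is strictly convex there, so that this critical point is forced to be the unique global minimum. The critical-point step is where the second order ODE of Theorem \ref{cn-JnKnLn} does all the work, while the convexity step carries the uniqueness.

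First I would exploit the ODE $(\ref{ED2O})$ satisfied by $p_n\propto c_n^{\lambda}(\cdot;k)$, with coefficients recorded in Theorem \ref{cn-JnKnLn}. Since $n=k\ell$ is a multiple of $k$, this is the block case $j=0$, and by $(\ref{MNk-1c})$ the polynomial $M_n$ reduces to a nonzero constant multiple of $\widehat{U}_{k-1}$; consequently $J_n=\Phi M_n$ is a constant times $(1-x^2)\widehat{U}_{k-1}^2$. Because $\lambda\geq0$ and $p_n(x)=\mathrm{const}\cdot C_\ell^{\lambda}\big(T_k(x)\big)$, the zeros $x_\nu$ are real and simple and lie in $(-1,1)\setminus Z_{U_{k-1}}$, so $p_n'(x_\nu)\neq0\neq J_n(x_\nu)$. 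Evaluating $(\ref{ED2O})$ at $x_\nu$ annihilates the $L_n p_n$ term and, using $K_n=\Psi M_n-\Phi M_n'$ together with $M_n'/M_n=\widehat{U}_{k-1}'/\widehat{U}_{k-1}$, yields
$$\frac{p_n''(x_\nu)}{p_n'(x_\nu)}=-\frac{K_n(x_\nu)}{J_n(x_\nu)}=-\frac{\Psi(x_\nu)}{\Phi(x_\nu)}+\frac{\widehat{U}_{k-1}'(x_\nu)}{\widehat{U}_{k-1}(x_\nu)}\,.$$
Inserting $\Phi,\Psi$ from $(\ref{PhiPsi-cn})$ and applying the identity $x\widehat{U}_{k-1}(x)-(1-x^2)\widehat{U}_{k-1}'(x)=k\widehat{T}_k(x)$ (the third relation of $(\ref{CH12345})$ with $n=k-1$), so that $k\widehat{T}_k/\big((1-x^2)\widehat{U}_{k-1}\big)=x/(1-x^2)-\widehat{U}_{k-1}'/\widehat{U}_{k-1}$, this collapses to
$$\frac{p_n''(x_\nu)}{p_n'(x_\nu)}=\frac{(2\lambda+1)x_\nu}{1-x_\nu^2}-2\lambda\sum_{j=1}^{k-1}\frac{1}{x_\nu-\cos\frac{j\pi}{k}}\,.$$
Since $\lambda=2q-\tfrac12$ gives $2\lambda+1=4q$ and $\widetilde{q}=\lambda$, and since $-2q/(x-1)-2q/(x+1)=4qx/(1-x^2)$, the right-hand side is precisely that of $(\ref{pxDeriv})$. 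Hence the zeros of $c_n^{\lambda}(\cdot;k)$ solve the equilibrium system $(\ref{partialL})$.

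Next I would confirm the localization $(\ref{k-intervals})$. As $c_n^{\lambda}(x;k)$ is a constant times $C_\ell^{\lambda}\big(T_k(x)\big)$, its zeros are the $T_k$-preimages of the $\ell$ zeros of $C_\ell^{\lambda}$, all of which lie in $(-1,1)$ for $\lambda\geq0$. Because $T_k(\cos\theta)=\cos k\theta$ maps each of the $k$ intervals delimited by the consecutive abscissae $\cos\frac{(k-j)\pi}{k}$ monotonically and bijectively onto $[-1,1]$ (its interior critical points are exactly the points of $Z_{U_{k-1}}$, which map to $\pm1$), each such interval receives precisely $\ell$ preimages and none coincides with a cell endpoint. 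Thus this configuration of zeros belongs to the open region $\Omega$, where it is a genuine interior critical point of $E$.

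The crux is uniqueness, and I expect this convexity step to be the main obstacle. I would first note that $\Omega$ is convex: within the box $\Sigma$ the cell membership of each coordinate is fixed, and the within-cell ordering constraints $x_{(j-1)\ell+1}<\cdots<x_{j\ell}$ cut out a convex subset (the cross-cell ordering being automatic). On $\Omega$ each external factor keeps a constant sign, so writing $-\ln(1-x_i^2)=-\ln(1-x_i)-\ln(1+x_i)$ and $-\ln|\widehat{U}_{k-1}(x_i)|=-\sum_{r}\ln|x_i-\cos\frac{r\pi}{k}|+\mathrm{const}$, a direct Hessian computation gives, for $v\neq0$,
$$v^\top\!\big(\nabla^2 E\big)\,v=2\!\!\sum_{1\le i<j\le n}\!\!\frac{(v_i-v_j)^2}{(x_i-x_j)^2}+2q\sum_{i=1}^n\Big(\frac{1}{(1-x_i)^2}+\frac{1}{(1+x_i)^2}\Big)v_i^2+2\widetilde{q}\sum_{i=1}^n\sum_{r=1}^{k-1}\frac{v_i^2}{(x_i-\cos\frac{r\pi}{k})^2}\,.$$
The pairwise terms are only positive semidefinite, but strict positivity is rescued by the external field: since $q\geq\tfrac14>0$ the middle sum is strictly positive for $v\neq0$, while the remaining contributions are nonnegative (here one uses $\widetilde{q}=\lambda\geq0$). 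Hence $\nabla^2E$ is positive definite on $\Omega$, so $E$ is strictly convex. A differentiable strictly convex function on a convex set has at most one critical point, and any such critical point is its global minimum; combined with $E\equiv+\infty$ on $\Xi$ this shows that the interior critical point exhibited above is the unique global minimizer of $E$. Therefore the energy is minimized exactly when $x_1,\ldots,x_n$ are the zeros of $c_n^{\lambda}(\cdot;k)$, and the equilibrium position is unique.
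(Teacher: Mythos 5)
Your proof is correct, and while its critical-point half coincides with the paper's argument, the uniqueness half follows a genuinely different route. For the critical point, both you and the paper evaluate the ODE of Theorem \ref{cn-JnKnLn} at the zeros, use $M_n'/M_n=\widehat{U}_{k-1}'/\widehat{U}_{k-1}$, and reduce $\Psi/\Phi$ to the partial-fraction form of the external field; your shortcut via the single identity $x\widehat{U}_{k-1}(x)-(1-x^2)\widehat{U}_{k-1}'(x)=k\widehat{T}_k(x)$ is a mild streamlining of the paper's computation of (\ref{Psi-div-Phi}), and it lands on exactly the paper's relation (\ref{Psi-div-Phi-Mn}); the localization argument placing the zeros in the region (\ref{k-intervals}) is also the same. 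The divergence is in how minimality and uniqueness are established. The paper shows the Hessian is real symmetric and strictly diagonally dominant with positive diagonal, hence positive definite (so the critical point is a local minimum), and then proves uniqueness by Szeg\H{o}'s arithmetic--geometric mean iteration applied to $T=e^{-E}$, combined with the fact that nondegenerate critical points are isolated --- a noticeably more involved, non-convexity-based argument. You instead note that the admissible set is convex and that the Hessian quadratic form, written as a sum of $(v_i-v_j)^2$-interaction terms plus external-field terms that are strictly positive (using $q\geq\frac14>0$ and $\widetilde{q}=2q-\frac12\geq0$), is positive definite at every admissible configuration; strict convexity on an open convex set then yields in one stroke that the exhibited critical point is the unique global minimizer, with no need for the AM--GM iteration or the isolation-of-critical-points argument. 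One point deserves emphasis, and you do handle it, though tersely: the paper's region $\Omega=\Sigma\setminus\Lambda$ is \emph{not} convex --- it is a disjoint union of $(\ell!)^k$ ordered chambers --- so the convexity argument is only valid after restricting to the single chamber cut out by the within-cell ordering constraints, which is precisely the admissible set described by (\ref{k-intervals}); since $E$ is symmetric under permutations of the charges, nothing is lost by this restriction. With that observation made explicit, your argument is complete, and is arguably cleaner than the published proof.
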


\begin{proof}
Let $\{x_{n,\nu}^\lambda\}_{\nu=1}^n$ be the set of zeros of $c_n^{\lambda}(x;k)$.
According to Theorem \ref{cn-JnKnLn}, this polynomial fulfills
the second order linear ODE (\ref{ED2O}), hence
evaluating at each zero $x_{n,\nu}^\lambda$, we obtain
$$
J_{n}\big(x_{n,\nu}^\lambda\big)\big\{c_{n}^\lambda\big\}^{''}\big(x_{n,\nu}^\lambda;k\big)
+K_{n}\big(x_{n,\nu}^\lambda\big)\big\{c_{n}^\lambda\big\}^{'}\big(x_{n,\nu}^\lambda;k\big)=0\;,
\quad 1\leq\nu\leq n\;.
$$
Therefore, taking into account (\ref{JKL1}), we deduce
\begin{equation}\label{Bnk2deriv}
\frac{\big\{c_{n}^\lambda\big\}^{''}\big(x_{n,\nu}^\lambda;k\big)}
{\big\{c_{n}^\lambda\big\}^{'}\big(x_{n,\nu}^\lambda;k\big)}=
\frac{M_{n}^{'}\big(x_{n,\nu}^\lambda\big)}{M_{n}\big(x_{n,\nu}^\lambda\big)}
-\frac{\Psi\big(x_{n,\nu}^\lambda\big)}
{\Phi\big(x_{n,\nu}^\lambda\big)}\;,\;\;1\leq\nu\leq n\;,
\end{equation}
where $\Phi$ and $\Psi$ are given by (\ref{PhiPsi-cn}),
and $M_{n}\equiv M_{k\ell}$ and $N_{n}\equiv N_{k\ell}$ are given by (\ref{MNk-1c}).
Next we will show that
\begin{equation}\label{Psi-div-Phi}
\frac{\Psi(x)}{\Phi(x)}=
\frac{2\lambda+1}{2}\,\left(\frac{1}{x-1}+\frac{1}{x+1}\right)
+(2\lambda+1)\sum_{j=1}^{k-1}\frac{1}{x-\cos\frac{j\pi}{k}}\;.
\end{equation}
Indeed, by (\ref{PhiPsi-cn}) and taking into account the last relation in (\ref{CH12345}),
we deduce
$$
\frac{\Psi(x)}{\Phi(x)}=-k(2\lambda+1)\frac{x}{1-x^2}
+\frac{k(2\lambda+1)}{2}\frac{1}{1-x^2}\frac{\widehat{U}_{k-2}(x)}{\widehat{U}_{k-1}(x)}\;.
$$
Thus (\ref{Psi-div-Phi}) follows by straightforward computations using the relations
$$
\frac{x}{1-x^2}=\frac12\frac{1}{1-x}-\frac12\frac{1}{1+x}\;,\quad
\frac{\widehat{U}_{k-2}(x)}{\widehat{U}_{k-1}(x)}
=\frac{2}{k}\sum_{j=1}^{k-1}\frac{\sin^2\frac{j\pi}{k}}{x-\cos\frac{j\pi}{k}}\;,
$$
$$
\frac{\sin^2\frac{j\pi}{k}}{(1-x^2)\big(x-\cos\frac{j\pi}{k}\big)}=
\frac{\cos^2\frac{j\pi}{2k}}{1-x}-
\frac{\sin^2\frac{j\pi}{2k}}{1+x}+
\frac{1}{x-\cos\frac{j\pi}{k}}\;,
$$
$$
\sum_{j=1}^{k-1}\cos^2\frac{j\pi}{2k}=\sum_{j=1}^{k-1}\sin^2\frac{j\pi}{2k}=\frac{k-1}{2}\;.
$$
On another hand, using (\ref{MNk-1c}) we have
\begin{equation}\label{MnM1n}
\frac{M_n'(x)}{M_n(x)}=\frac{\widehat{U}_{k-1}'(x)}{\widehat{U}_{k-1}(x)}
=\sum_{j=1}^{k-1}\frac{1}{x-\cos\frac{j\pi}{k}}\;.
\end{equation}
Combining (\ref{Psi-div-Phi}) and (\ref{MnM1n}) we obtain
\begin{equation}\label{Psi-div-Phi-Mn}
\frac{M_n'(x)}{M_n(x)}-\frac{\Psi(x)}{\Phi(x)}=
-\frac{2\lambda+1}{2}\left(\frac{1}{x-1}+\frac{1}{x+1}\right)
-2\lambda\sum_{j=1}^{k-1}\frac{1}{x-\cos\frac{j\pi}{k}}\;.
\end{equation}
Finally, from (\ref{Bnk2deriv}) and (\ref{Psi-div-Phi-Mn}) we deduce
\begin{equation}\label{Bnk2deriv1}
\frac{\big\{c_{n}^\lambda\big\}^{''}\big(x_{n,\nu}^\lambda;k\big)}
{\big\{c_{n}^\lambda\big\}^{'}\big(x_{n,\nu}^\lambda;k\big)}=
-\frac{2q}{x_{n,\nu}^\lambda-1}-\frac{2q}{x_{n,\nu}^\lambda+1}
-\sum_{j=1}^{k-1}\frac{2\widetilde{q}}{x_{n,\nu}^\lambda-\cos\frac{j\pi}{k}}\;,\quad 1\leq\nu\leq n\;.
\end{equation}
Therefore, the zeros $x_\nu\equiv x_{n,\nu}^\lambda$ of $c_n^{\lambda}(x;k)$ solve the system of equations (\ref{partialL}),
i.e., $\mbox{\rm x}^*:=(x_{n,1}^\lambda,\ldots,x_{n,n}^\lambda)$ is a critical point of $E$.
Notice that $\mbox{\rm x}^*\in\Omega$, i.e., $\mbox{\rm x}^*$ fulfills (\ref{k-intervals}),
since each zero $x_{n,\nu}^\lambda$ of $c_n^{\lambda}(x;k)$ satisfies $C_\ell^{\lambda}(T_k(x_{n,\nu}^\lambda)\big)=0$
($1\leq\nu\leq n$) and it is well known (and easy to check) that the ultraspherical polynomial $C_\ell^{\lambda}$
has $\ell$ distinct zeros in $]-1,1[$
and the Chebyshev polynomial $T_k$ has its critical points at the zeros of $U_{k-1}$, being the absolute value of $T_k$ at each critical point equal to $1$.
Next we show that $\mbox{\rm x}^*$ is indeed a (local) minimum of $E$, and, moreover,
it is the unique (global) minimum of $E$.
We will argue as in the proof of \cite[Theorem\;2.1]{Ismail-e1}.
Indeed, consider the hessian matrix $H(\mbox{\rm x})=[h_{i,j}(\mbox{\rm x})]_{i,j=1}^n$,
$h_{i,j}(\mbox{\rm x}):=\partial E(\mbox{\rm x})/\partial x_i\partial x_j$, $\mbox{\rm x}\equiv(x_1,\ldots,x_n)$.
Taking into account (\ref{partialLxnu}), we compute
$$
h_{i,j}(\mbox{\rm x})=\left\{
\begin{array}{lcl}
\displaystyle \frac{-2}{(x_i-x_j)^2} & \mbox{\rm if} & i\neq j \,, \\ [1em]
\displaystyle \sum_{\substack{\nu=1\\ \nu\neq i}}^n\frac{2}{(x_i-x_\nu)^2}+4q\,\frac{x_i^2+1}{(x_i^2-1)^2}+ \sum_{\substack{\nu=1}}^{k-1}\frac{2\widetilde{q}}{\big(x_i-\cos\frac{\nu\pi}{k}\big)^2}& \mbox{\rm if} & i= j \;.
\end{array}
\right.
$$
Therefore, for each $\mbox{\rm x}\in\Omega$, $H(\mbox{\rm x})$ is a real symmetric matrix with positive diagonal elements
and strictly diagonally dominant
(i.e., $|h_{i,i}(\mbox{\rm x})|>\sum_{\nu=1,\nu\neq i}^n|h_{i,\nu}(\mbox{\rm x})|$ for each $i=1,2,\ldots,n$).
It follows from \cite[Theorem\;6.1.10-(c)]{HJ1992}  
that $H(\mbox{\rm x})$ is a positive definite matrix for each $\mbox{\rm x}\in\Omega$, hence  $\mbox{\rm x}^*$
is indeed a local minimum of $E$ in $\Omega$.
To see that this minimum is unique (and so it is a global minimum), we may argue as in \cite[p.\;140]{Szego},
using the arithmetic-geometric mean inequality.
Indeed, notice first that a point in $\Omega$ is a minimum of $E$ if and only if it is
a maximum of $T:[-1,1]^n\to\mathbb{R}$ defined by
$$
T(x_1,\ldots,x_n):=\exp\big(-E(x_1,\ldots,x_n)\big)\;,
$$
or, explicitly,
$$
T(x_1,\ldots,x_n):=
\prod_{r=1}^n(1-x_r^2)^{2q}\cdot
\prod_{s=1}^n\prod_{t=1}^{k-1}\big|x_s-\mbox{$\cos\frac{t\pi}{k}$}\big|^{2\widetilde{q}}\cdot
\prod_{\substack{\nu,\mu=1\\ \nu<\mu}}^n\big|x_\nu-x_\mu\big|^2\;.
$$
Suppose that $T$ attains relative maxima at two different critical points
$\mbox{\rm x}=(x_1,\ldots,x_n)\in\Omega$ and $\mbox{\rm x}'=(x_1',\ldots,x_n')\in\Omega$.
These points fulfill (\ref{k-intervals}), i.e.,
$$
\cos\frac{(k-j)\pi}{k}<x_{j\ell+1}<x_{j\ell+2}<\cdots<x_{(j+1)\ell}<\cos\frac{(k-j-1)\pi}{k}\;,
$$
$$
\cos\frac{(k-j)\pi}{k}<x_{j\ell+1}'<x_{j\ell+2}'<\cdots<x_{(j+1)\ell}'<\cos\frac{(k-j-1)\pi}{k}
$$
for each $j=0,1,\ldots,k-1$. Then, considering
$\mbox{\rm x}'':=(\mbox{\rm x}+\mbox{\rm x}')/2\equiv(x_1'',\ldots,x_n'')$,
we deduce, for each $\nu,\mu=1,2,\ldots,n$ and $t=0,1,\cdots,k$,
$$
\begin{array}{c}\displaystyle
|x_\nu''-x_\mu''|=\frac{|x_\nu-x_\mu|+|x_\nu'-x_\mu'|}{2}\geq\big|x_\nu-x_\mu\big|^{1/2}\big|x_\nu'-x_\mu'\big|^{1/2}\;, \\ [1em]
\big|x_\nu''-\mbox{$\cos\frac{t\pi}{k}$}\big|\geq
\big|x_\nu-\mbox{$\cos\frac{t\pi}{k}$}\big|^{1/2}\big|x_\nu'-\mbox{$\cos\frac{t\pi}{k}$}\big|^{1/2}\;,
\end{array}
$$
and so $T(\mbox{\rm x}'')\geq T(\mbox{\rm x})^{1/2}\,T(\mbox{\rm x}')^{1/2}$.
Therefore, assuming without loss of generality that
$\min\{T(\mbox{\rm x}),T(\mbox{\rm x}')\}=T(\mbox{\rm x})$, we obtain $T(\mbox{\rm x}'')\geq T(\mbox{\rm x})$.
Proceeding in the same way, taking $\mbox{\rm x}''':=(\mbox{\rm x}+\mbox{\rm x}'')/2$,
we see that $\mbox{\rm x}'''\in\Omega$ and $T(\mbox{\rm x}''')\geq T(\mbox{\rm x})$.
Continuing the process we obtain a sequence of different points $\mbox{\rm x}^{(m)}\in\Omega$
such that $\mbox{\rm x}^{(m)}\to\mbox{\rm x}$ (as $m\to\infty$)
and $T(\mbox{\rm x}^{(m)})\geq T(\mbox{\rm x})$ for each $m\in\mathbb{N}$.
Therefore, since $T$ attains a relative maximum at $\mbox{\rm x}$ then there exists an order $m_0$ such that
$T(\mbox{\rm x}^{(m)})=T(\mbox{\rm x})$ for each $m\geq m_0$, and so $T$ attains also a relative maximum
at each point $\mbox{\rm x}^{(m)}$ with $m\geq m_0$.
As a consequence, every neighborhood of $\mbox{\rm x}$
contains critical points of $T$ (in $\Omega$) different from $\mbox{\rm x}$.
However, this is impossible, since the hessian matrix $H(\mbox{\rm x})$ is invertible at each critical point
(since it is strictly diagonally dominant; see \cite[Theorem\;6.1.10-(a)]{HJ1992})
hence each critical point is nondegenerate, and so an isolated critical point
(see e.g. \cite[Section 16.5, Problem 4-(b)]{JDieudonne}, or \cite[Chapter 3, Section 8, Theorems 4 and 5]{ELLima}).
\end{proof}

\begin{remark}
Since $n=k\ell$, then 
$B_{n+k-1}^{\lambda-1}(x;k)/U_{k-1}(x)=\frac{(2\lambda)_\ell}{\ell!}\,c_n^{\lambda}(x;k)$,
hence the electrostatic problem under consideration
in Theorem \ref{Thm-electrost-sieved} is equally solved by the zeros of
the sieved ultraspherical polynomial of the second kind $B_{n+k-1}^{\lambda-1}(x;k)$ which are different
from the zeros of $U_{k-1}(x)$. 
All these polynomials are plotted in Figure \ref{fig2} for $\lambda=\frac32$,
$k=5$ and $n=10$ (or $\ell=2$).
\end{remark}

\begin{remark}
The analysis of the electrostatic problem whenever
the residue modulo $k$ of $n$ is different from $0$ remains an open problem.
\end{remark}

\begin{center}
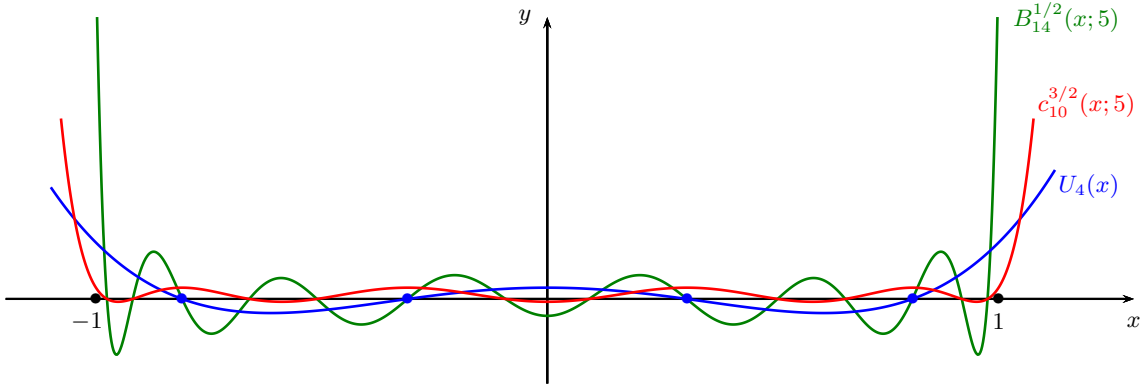
\begin{figure}[!h]
\begin{pspicture}(0,0)(-6,4)
\psset{xunit=6cm,yunit=0.15cm} 
 \psset{plotpoints=1000}
\psaxes[Dx=2,Dy=100]{->}(0,0)(-1.2,-7.5)(1.3,25) 
\infixtoRPN{30720*x^14-99840*x^12+126720*x^10-79200*x^8+25200*x^6-3774*x^4+411/2*x^2-3/2} \psplot[linecolor=verde,linewidth=1pt]{-0.9975}{0.9975}{\RPN}
\infixtoRPN{16*x^4-12*x^2+1} \psplot[linecolor=blue,linewidth=1pt]{-1.1}{1.125}{\RPN}
\infixtoRPN{320*x^10-800*x^8+700*x^6-250*x^4+125/4*x^2-1/4} \psplot[linecolor=red,linewidth=1pt]{-1.0775}{1.0775}{\RPN}
\rput(-1,0){{\black$\bullet$}}
\rput(1,0){{\black$\bullet$}}
\rput(-0.31,0){{\blue$\bullet$}}
\rput(0.31,0){{\blue$\bullet$}}
\rput(-0.81,0){{\blue$\bullet$}}
\rput(0.81,0){{\blue$\bullet$}}
\rput(1.2,10){\blue\begin{small}$U_4(x)$\end{small}}
\rput(1.15,25){\verde\begin{small}$B_{14}^{1/2}(x;5)$\end{small}}
\rput(1.195,17.5){\red\begin{small}$c_{10}^{3/2}(x;5)$\end{small}}
\rput(-1.02,-2){\begin{small}$-1$\end{small}}
\rput(1,-2){\begin{small}$1$\end{small}}
\rput(1.3,-2){\begin{small}$x$\end{small}}
\rput(-0.05,25){\begin{small}$y$\end{small}}
\end{pspicture}
\vspace{2.5em}
\caption{\small Plots of the polynomials involved in the electrostatic model
for the choices $\lambda=3/2$, $k=5$, and $n=10$}
\label{fig2}
\end{figure}
\end{center}

\section*{Acknowledgements}

KC and JP are supported by the Centre for Mathematics of the University of Coimbra--UID/MAT/00324/2019, funded by the Portuguese Government through FCT/MEC and co-funded by the European Regional Development Fund through the Partnership Agreement PT2020.
MNJ supported by UID/Multi/04016/2019, funded by FCT.
MNJ also thanks the Instituto Polit\'e\-cnico de Viseu and CI\&DETS for their support.

\end{document}